\newcommand{\Implies}[2]{$\text{\ref{#1}}\implies\text{\ref{#2}}$}
\newcommand{\Iff}[2]{$\text{\ref{#1}}\iff\text{\ref{#2}}$}
\DeclareRobustCommand*{\bfseries}{%
  \not@math@alphabet\bfseries\mathbf
  \fontseries\bfdefault\selectfont
  \boldmath
}
\newtheorem{theo}{Theorem}[section]
\newtheorem{lemma}[theo]{Lemma}
\newtheorem{defi}[theo]{Definition}
\newtheorem{prop}[theo]{Proposition}
\newtheorem{cor}[theo]{Corollary}
\newtheorem{remark}[theo]{Remark}
\newtheorem{ques}[theo]{Question}
\numberwithin{equation}{section}
\mathchardef\mhyphen="2D
\def\N{\mathbb{N}}
\def\C{\mathbb{C}}
\def\Z{\mathbb{Z}}
\def\Q{\mathbb{Q}}
\def\FF{{\mathbb F}}
\def\pre-tr{\operatorname{pre-tr}}
\def\End{\operatorname{End}}
\def\rad{\operatorname{rad}}
\newcommand{\bbar}{\overline}
\newcommand{\xto}{\xrightarrow}
\newcommand{\hto}{\hookrightarrow}
\newcommand{\onto}{\twoheadrightarrow}
\newcommand{\can}{\operatorname{can}}
\newcommand{\Gal}{\operatorname{Gal}}
\newcommand{\mk}{\mathrm k}
\newcommand{\cO}{{\mathcal O}}
\newcommand{\veps}{\varepsilon}
\newcommand{\cl}{\operatorname{cl}}
\newcommand{\ab}{\operatorname{ab}}
\newcommand{\supp}{\operatorname{Supp}}
\newcommand{\cha}{\operatorname{char}}
\newcommand{\Tor}{\operatorname{Tor}}
\newcommand{\Nm}{\operatorname{Nm}}
\newcommand{\Aut}{\operatorname{Aut}}
\newcommand{\Ind}{\operatorname{Ind}}
\newcommand{\Spf}{\operatorname{Spf}}
\newcommand{\sep}{\operatorname{sep}}
\newcommand{\m}{\mathfrak{m}}
\title[On the Hahn-Witt series and their generalizations]
{On the Hahn-Witt series and their generalizations}
\author{Alexander I. Efimov}
\address{The Hebrew University of Jerusalem}
\email{efimov@mccme.ru}
\thanks{The author was partially supported by the European Research Council (ERC, CurveArithmetic, 101078157).}
\begin{document}

\begin{abstract} In this paper we study the field of Hahn-Witt series $HW(\bbar{\FF}_p)$ with residue field $\bbar{\FF}_p$ (also known as a $p$-adic   Malcev-Neumann field \cite{La86, P93}), and its generalizations. Informally, the Hahn-Witt series are possibly infinite linear combinations of rational powers of $p,$ in which the coefficients are Teichm\"uller representatives, and the set of exponents is well-ordered. They form an algebraically closed extension of $\Q_p,$ with a canonical automorphism $\varphi,$ coming from the absolute Frobenius of $\bbar{\FF}_p.$ We prove that the action of $\varphi$ on the $p$-power roots of unity is given by $\varphi(\zeta)=\zeta^{-1},$ answering a question of Kontsevich.

More generally, we consider the $\pi$-typical Hahn-Witt series $HW_{(K,\pi)}(\bbar{\FF}_q)$, where $\pi$ is a uniformizer in a local field $K$ with residue field $\FF_q.$ Again, this field is an algebraically closed extension of $K,$ and it has a canonical automorphism $\varphi_{\pi},$ coming from the relative Frobenius of $\bbar{\FF}_q$ over $\FF_q.$ We prove that the action of $\varphi_{\pi}$ on the maximal abelian extension $K^{\ab}$ corresponds via local class field theory to the uniformizer $-\pi\in K^*.$ 
\end{abstract}


\maketitle

\tableofcontents

\section{Introduction}

In this paper we study the field $HW(\bbar{\FF}_p)$ of Hahn-Witt series and its generalizations. The field $HW(\bbar{\FF}_p)$ is also known as a $p$-adic Malcev-Neumann field \cite{La86, P93}, with residue field $\bar{\FF}_p$ and value group $\Q.$ Informally, the elements of this field are expressions of the form $\sum\limits_{i\in \Q}[a_i]p^i,$ where $a_i\in\bbar{\FF}_p$ and the set $\{i: a_i\ne 0\}\subset\Q$ is well-ordered (i.e. there are no infinite strictly decreasing sequences of elements in this set). Here $[a_i]$ is the Teichm\"uller representative of $a_i.$ The addition and multiplication are defined essentially by the same rules as for the usual Witt vectors of perfect $\FF_p$-algebras. Replacing $\bbar{\FF}_p$ with an arbitrary perfect $\FF_p$-algebra $R,$ we obtain a functor
$$HW:\{\text{perfect }\FF_p\text{-algebras}\}\to \{\Q_p\text{-algebras}\}.$$
We refer to Section \ref{sec:Hahn_series} and Definition \ref{def:Hahn_Witt} (applied to $K=\Q_p,$ $\pi=p$) for details.

By \cite[Theorem 1]{P93}, the field $HW(\bbar{\FF}_p)$ is maximally complete, i.e. it has no proper immediate extensions (that is, proper extensions with a valuation extended from $HW(\bbar{\FF}_p),$ with the same value group and with the same residue field). Note that the Gelfand-Mazur theorem can be thought of as a statement about ``maximal completeness'' of the field of complex numbers: there are no proper extensions of $\C$ with a norm extended from $\C.$

The maximal completeness of the field $HW(\bbar{\FF}_p)$ implies that it is algebraically closed \cite[Proposition 6]{P93}. The $p$-adic completion of the integral closure of $W(\bbar{\FF}_p)$ in $HW(\bbar{\FF}_p)$ was described by Kedlaya \cite{Ke01} (in fact, in loc. cit. $\bbar{\FF}_p$ is replaced by an arbitrary algebraically closed field of characteristic $p,$ but for $\bbar{\FF}_p$ the description simplifies considerably). The description in loc. cit. implies in particular that if a Hahn-Witt series $f=\sum\limits_i [a_i]p^i$ is algebraic over $\Q_p,$ then the set $\{i: a_i\ne 0\},$ considered as an ordinal (the order type of $f$), is at most $\omega^{\omega}.$ This is a very strong condition: for a general Hahn-Witt series, its order type can be arbitrary at most countable ordinal.

Another remarkable property of Hahn-Witt series which are algebraic over $\Q_p$ was proved by Lampert \cite{La86}: all the accumulation values of the set of exponents are rational. Namely, by \cite[Theorem 2]{La86} the Hahn-Witt series with this property form an algebraically closed field. For more results on the algebraic closure of $\Q_p$ in $HW(\bbar{\FF}_p)$ we refer to \cite[Section 7]{P93}.

Unlike the Tate field $\C_p$ (which is defined only up to a non-canonical continuous isomorphism), the field $HW(\bbar{\FF}_p)$ has the canonical automorphism $\varphi,$ which comes from the (absolute) Frobenius automorphism of $\bbar{\FF}_p.$ Explicitly, we have $$\varphi(\sum\limits_{i\in \Q}[a_i]p^i)=\sum\limits_{i\in \Q}[a_i^p]p^i.$$
This automorphism can be thought of as a $p$-adic analogue of complex conjugation. Note that $\varphi$ induces a canonical conjugacy class in the absolute Galois group $G_{\Q_p},$ and in particular in $G_{\Q}.$

The subfield of $\varphi$-invariants in $HW(\bbar{\FF}_p)$ is exactly $HW(\FF_p).$ In particular, the field  $HW(\FF_p)$ is quasi-finite (i.e. its Galois group is isomorphic to $\hat{\Z}$), and its finite extensions are exactly of the form $HW(\FF_{p^n}),$ $n\geq 1.$ 

Now, consider the group $\mu_{p^{\infty}}$ of $p$-power roots of unity in $HW(\bbar{\FF}_p).$ This group is non-canonically isomorphic to $\Q_p/\Z_p,$ hence there is a unique invertible $p$-adic integer $u_p\in\Z_p^*$ such that $$\varphi(\zeta)=\zeta^{u_p},\quad \zeta\in\mu_{p^{\infty}}.$$

The following natural question, attributed to Kontsevich, was suggested to me by Robert Burklund and Achim Krause.

\begin{ques}[Kontsevich]\label{ques:u_p} What is $u_p\in\Z_p^*$?\end{ques}

In this paper we obtain the following answer as a special case of our main result (Theorem \ref{th:main}).

\begin{theo}\label{th:main_for_Hahn_Witt} Within the above notation, we have $u_p=-1$ for all prime numbers $p.$\end{theo}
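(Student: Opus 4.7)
The plan is to deduce Theorem~\ref{th:main_for_Hahn_Witt} as the specialization of the paper's main result Theorem~\ref{th:main} to the case $K=\Q_p$, $\pi=p$. With these choices, Theorem~\ref{th:main} asserts that the restriction of the Hahn--Witt Frobenius $\varphi$ to $\Q_p^{\ab}\subset HW(\bbar{\FF}_p)$ corresponds under the local reciprocity map $\mathrm{rec}\colon\Q_p^{*}\to\Gal(\Q_p^{\ab}/\Q_p)$ to the element $-p\in\Q_p^{*}$. Granted this, the whole problem reduces to reading off the action of $\mathrm{rec}(-p)$ on $\Q_p(\mu_{p^{\infty}})\subset\Q_p^{\ab}$, which is an elementary exercise in Lubin--Tate theory.

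To carry out that computation, one uses that the uniformizer $\pi=p$ of $\Q_p$ corresponds to the multiplicative formal group $\widehat{\bbG}_m$, so the Lubin--Tate tower of $(\Q_p,p)$ is exactly $\Q_p(\mu_{p^{\infty}})$ and $\Q_p^{\ab}=\Q_p^{\mathrm{nr}}\cdot\Q_p(\mu_{p^{\infty}})$. In the arithmetic normalization of local class field theory (so that $\mathrm{rec}(p)$ lifts the arithmetic Frobenius on $\Q_p^{\mathrm{nr}}$ and acts trivially on $\Q_p(\mu_{p^{\infty}})$), every $u\in\Z_p^{*}$ acts via $\mathrm{rec}(u)(\zeta)=\zeta^{u^{-1}}$ for $\zeta\in\mu_{p^{\infty}}$. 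Writing $-p=(-1)\cdot p$ and restricting to $\mu_{p^{\infty}}$, one finds
\[
\varphi(\zeta)=\mathrm{rec}(-p)(\zeta)=\mathrm{rec}(-1)(\zeta)=\zeta^{(-1)^{-1}}=\zeta^{-1},
\]
giving $u_p=-1$. As a consistency check, on $\Q_p^{\mathrm{nr}}$ we have $\mathrm{rec}(-p)=\mathrm{rec}(-1)\mathrm{rec}(p)=\mathrm{rec}(p)$, which acts as the arithmetic Frobenius and therefore agrees with $\varphi|_{\Q_p^{\mathrm{nr}}}$ by construction; the sign $-1$ shows up only on the cyclotomic part, as it should.

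The main obstacle is not this deduction but Theorem~\ref{th:main} itself, and in particular the identification of the reciprocity image of $\varphi_{\pi}$ with $-\pi$ rather than $\pi$: the entire non-trivial content of Question~\ref{ques:u_p} is absorbed into that single sign. I would expect the proof of Theorem~\ref{th:main} to proceed by constructing an explicit Hahn--Witt lift of a coherent system of Lubin--Tate torsion points inside $HW(\bbar{\FF}_q)$, computing how $\varphi_{\pi}$ permutes this system, and matching the result with multiplication by $-\pi$ on the Tate module of the Lubin--Tate formal group via the reciprocity pairing. The subtle sign should then emerge from carefully tracking Teichm\"uller representatives through the Coleman/Lubin--Tate power-series machinery, and verifying compatibility across the tower of $\pi$-typical Witt vectors.
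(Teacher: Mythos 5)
Your proposal is correct and follows the paper's own route: the paper also obtains Theorem~\ref{th:main_for_Hahn_Witt} as the specialization of Theorem~\ref{th:main} to $(K,\pi)=(\Q_p,p)$, noting that $\can_{\Q_p}(p)=-p$ means exactly that $\varphi$ acts on $\mu_{p^{\infty}}$ by $\zeta\mapsto\zeta^{-1}$, which is the same Lubin--Tate/cyclotomic unwinding (with the same arithmetic normalization, cf.\ Theorem~\ref{th:LT_main}(3)) that you spell out. The only difference is that you make the normalization bookkeeping explicit, while the genuinely hard content, as you correctly identify, lives in the proof of Theorem~\ref{th:main} itself.
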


In Section \ref{sec:computations} we do some computations of (approximations of) $p$-power roots of unity in $HW(\bbar{\FF}_p).$ These computations illustrate that a direct approach to Question \ref{ques:u_p} is unrealistic.

We now formulate a generalization of Theorem \ref{th:main_for_Hahn_Witt}, which will be also necessary for the proof. Let $K$ be a local field, i.e. a complete discrete valuation field with a finite residue field. Denote by $\cO_K$ the valuation ring, and choose a uniformizer $\pi\in\cO_K.$ We identify the residue field $\mk=\cO_K/\pi$ with $\FF_q.$ Recall that we have a functor of $\pi$-typical Witt vectors
$$W_{(K,\pi)}:\{\cO_K\text{-algebras}\}\to \{\cO_K\text{-algebras}\},$$
introduced in \cite{Dr76}. Our notation is somewhat cumbersome, but we want to stress that we make a choice of the uniformizer $\pi$ in what follows.

The functor $W_{(K,\pi)}$ is defined similarly to the usual functor of $p$-typical Witt vectors, but the ghost coordinates are now given by
$$w_n(x_0,x_1,\dots)=\sum\limits_{i=0}^n \pi^i x_i^{q^{n-i}},\quad n\geq 0.$$
Also, the Joyal's approach to $p$-typical Witt vectors via $\delta$-rings \cite{J85} has a natural generalization to the case of $\pi$-typical Witt vectors, see \cite{Bo11}. Here $\delta$-rings are replaced by $\delta_{\pi}$-$\cO_E$-algebras. 

There is a unique functorial multiplicative section $[\cdot]:R\to W_{(K,\pi)}(R)$ of the projection $w_0:W_{(K,\pi)}(R)\to R,$ such that $w_n([r])=r^{q^n}$ for $n>0,$ $r\in R.$ As usual, $[r]$ is called the Teichm\"uller representative of $r.$ 

If $R$ is a perfect $\FF_q$-algebra, considered as an $\cO_K$-algebra, then $W_{(K,\pi)}(R)$ is a $\pi$-complete, $\pi$-torsion free $\cO_K$-algebra. The functor
$$W_{(K,\pi)}:\{\text{perfect }\FF_q\text{-algebras}\}\to \{\pi\text{-complete, }\pi\text{-torsion free }\cO_K\text{-algebras}\}$$
is the partially defined left adjoint functor to the mod $\pi$ reduction
$$\{\pi\text{-complete, }\pi\text{-torsion free }\cO_K\text{-algebras}\}\to \{\FF_q\text{-algebras}\}.$$
In particular, we have $W_{(K,\pi)}(\FF_q)=\cO_K.$ If $R$ is a perfect $\FF_q$-algebra, then any element of $W_{(K,\pi)}(R)$ can be uniquely written as an infinite sum $\sum\limits_{i=0}^{\infty}[a_i]\pi^i,$ where $a_i\in R.$ 

Now, the $K$-algebra of $\pi$-typical Hahn-Witt series $HW_{(K,\pi)}(R)$ of a perfect $\FF_q$-algebra $R$ can be defined informally as the set of expressions of the
form $\sum\limits_{i\in\Q} [a_i]\pi^i,$ where $a_i\in R$ and the set $\{i: a_i\ne 0\}$ is well-ordered. The addition and multiplication are defined by the same rules as for the usual $\pi$-typical Witt vectors, and the map $K\to HW_{(K,\pi)}(R)$ is the tautological inclusion (assuming that $R\ne 0$). We refer to Section \ref{sec:Hahn_series} and Definition \ref{def:Hahn_Witt} for details. We obtain a functor
$$HW_{(K,\pi)}:\{\text{perfect }\FF_q\text{-algebras}\}\to \{K\text{-algebras}\}.$$

Now we formulate a generalization of Theorem \ref{th:main_for_Hahn_Witt}. 
Consider the $K$-algebra $HW_{(K,\pi)}(\bbar{\FF}_q).$ This $K$-algebra is in fact an algebraically closed field, by a straightforward generalization of \cite[Theorem 1, Proposition 6]{P93}. By functoriality, the relative Frobenius automorphism of $\bbar{\FF}_q$ over $\FF_q$ induces a $K$-linear automorphism $\varphi_{\pi}$ of $HW_{(K,\pi)}(\bbar{\FF}_q).$ Explicitly, we have  
$$\varphi_{\pi}(\sum\limits_i [a_i] \pi^i)=\sum\limits_i [a_i^q] \pi^i.$$

Denoting by $K^{\sep}$ the separable algebraic closure of $K$ in $HW_{(K,\pi)}(\bbar{\FF}_q),$ we see that $\varphi_{\pi}$ induces an element of the absolute Galois group $\Gal(K^{\sep}/K),$ which induces the relative Frobenius automorphism on the residue field. In particular, this element is contained in the Weil group $W_K,$ and it is of degree $1.$  Our main result is the following.

\begin{theo}\label{th:main} Let $K$ be a local field, and choose a uniformizer $\pi\in K.$ We identify the residue field $\cO_K/\pi$ with $\FF_q.$ Consider the canonical automorphism $\varphi_{\pi}$ of the field of $\pi$-typical Hahn-Witt series $HW_{(K,\pi)}(\bbar{\FF}_q).$ Consider the induced element $\bbar{\varphi}_{\pi}\in W_K^{\ab}$ of degree $1.$ Applying the inverse of the Artin reciprocity map $\theta_K^{-1}:W_K^{\ab}\xto{\sim} K^*$ to $\bbar{\varphi}_{\pi},$ we obtain a uniformizer $\can_K(\pi)\in K^*.$ Then we have $$\can_K(\pi)=-\pi.$$\end{theo}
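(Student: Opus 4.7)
Since $\varphi_\pi$ fixes $K$ pointwise and acts coefficient-wise as $a\mapsto a^q$ on $\bbar{\FF}_q$, its restriction to the maximal unramified extension $K^{\mathrm{nr}}\subset HW_{(K,\pi)}(\bbar{\FF}_q)$ is the arithmetic Frobenius; in particular $\bbar{\varphi}_\pi$ has degree $1$ in $W_K^{\ab}$ and $\can_K(\pi)$ is a uniformizer of $K$. By the defining property of Lubin-Tate theory, $\theta_K(-\pi)$ is the unique element of $W_K^{\ab}$ acting as arithmetic Frobenius on $K^{\mathrm{nr}}$ and trivially on the Lubin-Tate extension $K_{-\pi}$ attached to the uniformizer $-\pi$. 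The plan therefore reduces to showing $K_{-\pi}\subset F$, where $F:=HW_{(K,\pi)}(\FF_q)\subset HW_{(K,\pi)}(\bbar{\FF}_q)$ is, by the very definition of $\varphi_\pi$, the $\varphi_\pi$-fixed subfield.

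The field $F$ is itself maximally complete --- by the straightforward generalization of \cite[Theorem 1]{P93} to arbitrary perfect $\FF_q$-algebras --- with value group $\Q$ and residue field $\FF_q$. Choose the concrete Lubin-Tate series $g(X)=-\pi X+X^q$ for $-\pi$, and a compatible system of primitive torsion points $\pi_n\in HW_{(K,\pi)}(\bbar{\FF}_q)$ satisfying $g(\pi_n)=\pi_{n-1}$ with $\pi_1=[1]\pi^{1/(q-1)}$. An easy Newton polygon computation gives $v(\pi_n)=1/((q-1)q^{n-1})$, and inductively the leading Hahn-Witt coefficient of $\pi_n$ equals $1\in\FF_q$: indeed, the leading-order equation at step $n$ is $c^q=1$, whose only solution in $\bbar{\FF}_q$ is $c=1$ since the Frobenius is injective. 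It then follows that any valuation-zero element of $L_n:=F(\pi_n)$, written as an $F$-linear combination $\sum_i f_i \pi_n^i$, has residue in $\FF_q$: each effective monomial $f_i \pi_n^i$ has residue equal to the residue of $f_i \pi^{-i\,v(\pi_n)}\in F$ times $(\pi_n/\pi^{v(\pi_n)})^i$, and $\pi_n/\pi^{v(\pi_n)}$ has residue $1\in\FF_q$. Hence $k_{L_n}=\FF_q=k_F$, while $v(L_n^*)=\Q=v(F^*)$ is clear; so $L_n/F$ is a finite immediate extension.

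By the defining property of maximally complete valued fields --- no proper immediate extensions --- we conclude $L_n=F$, i.e., $\pi_n\in F$, for every $n\geq 1$. Taking unions yields $K_{-\pi}=\bigcup_n K(\pi_n)\subset F$, so $\varphi_\pi|_{K_{-\pi}}=\id$, which together with the computation of $\varphi_\pi|_{K^{\mathrm{nr}}}$ gives $\bbar{\varphi}_\pi=\theta_K(-\pi)$, i.e., $\can_K(\pi)=-\pi$. The main substantive ingredient is the maximal completeness of $F$ (a direct adaptation of \cite[Theorem 1]{P93}), which is what promotes totally ramified Lubin-Tate extensions to immediate extensions over $F$ and forces them to be trivial; the only actual ``computation'' required is the leading-term analysis of the single Lubin-Tate polynomial $-\pi X+X^q$, and no further expansion of the torsion points as Hahn-Witt series is needed --- consistent with the paper's remark that an explicit computational attack on Kontsevich's question is infeasible.
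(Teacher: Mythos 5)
Your reduction to showing $K_{-\pi}\subset F:=HW_{(K,\pi)}(\FF_q)$ (via Lubin--Tate theory) is the same reduction the paper makes, and the maximal completeness of $F$ is correct; but the crucial step --- the claim that $L_n=F(\pi_n)$ is an \emph{immediate} extension of $F$ --- is not established by the monomial-by-monomial residue argument you give, and in fact that argument is invalid as stated. The problem is cancellation: from $a=\sum_i f_i\pi_n^i$ with $v(a)=0$ you cannot read off $\bar a$ from the (normalized) residues of the individual monomials when $\min_i v(f_i\pi_n^i)<0$, since the leading coefficients then sum to zero and the residue of $a$ is governed by higher-order data about $\pi_n$. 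A concrete illustration that ``primitive element with residue in $\FF_q$ after normalization'' does \emph{not} imply ``residue field equals $\FF_q$'': take $F=\FF_p((\pi^{\Q}))$, $\omega\in\FF_{p^2}\setminus\FF_p$ and $\theta=1+\omega\pi$. Then $\theta$ is a unit with residue $1\in\FF_p$, yet $F(\theta)=F(\omega)$ has residue field $\FF_{p^2}$, and the offending element $(\theta-1)/\pi=(-\pi^{-1})\cdot 1+(\pi^{-1})\cdot\theta$ is exactly a cancelling combination of the type your argument ignores. So knowing only the \emph{leading} Hahn--Witt coefficient of $\pi_n$ is not enough; one would need control of \emph{all} coefficients, which is essentially the theorem itself.

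Indeed the immediacy of $F(\pi_n)/F$ is, given maximal completeness, \emph{equivalent} to $\pi_n\in F$, which in turn (for all $n$) is equivalent to the statement $\can_K(\pi)=-\pi$. So the proposal reduces the theorem to an equivalent assertion and does not supply a non-circular proof of it. This is precisely the nontrivial arithmetic content that the paper attacks differently: in equal characteristic it writes down the roots $y_n\in\FF_q((\pi^\Q))$ explicitly, and in mixed characteristic it shows the same expressions are sufficiently good \emph{approximate} roots in $HW_{(K,\pi)}(\FF_q)$, upgrading them to genuine roots via Krasner's lemma (Lemma~\ref{lem:LT_for_approximating_can_K}) and then bootstrapping through ramified extensions using Proposition~\ref{prop:can_related_via_Norm} and Lemma~\ref{lem:norms_for_wildly_ramified}. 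Some concrete construction of this kind --- or another genuine argument replacing it --- is needed; the soft argument via maximal completeness alone does not close the gap.
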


Note that Theorem \ref{th:main_for_Hahn_Witt} is indeed a special case of Theorem \ref{th:main}: the equality $\can_{\Q_p}(p)=-p$ exactly means that $\varphi$ is acting on the $p$-power roots of unity by $\varphi(\zeta)=\zeta^{-1}.$ 

We have an immediate corollary, which is quite surprising (to the author).

\begin{cor}Let $K$ be a local field, and denote by $W_K^{\deg=1}$ the set of elements of the Weil group of degree $1,$ and similarly for $W_K^{\ab,\deg =1}.$ Denote by $\cl(W_K^{\deg=1})$ the set of conjugacy classes. Then the surjection $\cl(W_K^{\deg=1})\onto W_K^{\ab,\deg=1}$ has a canonical section which sends $\theta_K(\pi)$ to the conjugacy class coming from $\varphi_{-\pi}.$\end{cor}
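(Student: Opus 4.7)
The plan is to read off the section directly from Theorem \ref{th:main}; essentially no new ingredient is needed beyond the main theorem and a bookkeeping remark on canonicity.

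First, I would identify $W_K^{\ab,\deg=1}$ with the set of uniformizers of $K$. Via the Artin reciprocity isomorphism $\theta_K : K^* \xto{\sim} W_K^{\ab}$, the subset $W_K^{\ab,\deg=1}$ corresponds bijectively to the set of uniformizers of $K$, since the composition $K^* \xto{\theta_K} W_K^{\ab} \to \Z$ recovers the normalized valuation. Hence every element of $W_K^{\ab,\deg=1}$ can be written uniquely as $\theta_K(\pi)$ for a uniformizer $\pi\in \cO_K$. For each such $\pi$, the construction preceding Theorem \ref{th:main}, applied to the uniformizer $-\pi$, produces the canonical automorphism $\varphi_{-\pi}$ of the field $HW_{(K,-\pi)}(\bbar{\FF}_q)$; its restriction to the separable closure of $K$ inside that field yields an element of $W_K$ of degree $1$. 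Any two identifications of a fixed separable closure $K^{\sep}$ with the one inside $HW_{(K,-\pi)}(\bbar{\FF}_q)$ differ by an element of $\Gal(K^{\sep}/K)$, so the resulting conjugacy class $[\varphi_{-\pi}]\in\cl(W_K^{\deg=1})$ is canonically attached to $\pi$.

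I would then define the proposed section by
\[
s : W_K^{\ab,\deg=1} \longrightarrow \cl(W_K^{\deg=1}), \qquad s(\theta_K(\pi))=[\varphi_{-\pi}],
\]
and check the section property. The image of $[\varphi_{-\pi}]$ under the natural surjection $\cl(W_K^{\deg=1})\onto W_K^{\ab,\deg=1}$ is $\bbar{\varphi}_{-\pi}$, so it suffices to show $\bbar{\varphi}_{-\pi}=\theta_K(\pi)$. This is precisely Theorem \ref{th:main} applied to the uniformizer $-\pi$ in place of $\pi$: one obtains
\[
\theta_K^{-1}(\bbar{\varphi}_{-\pi}) = \can_K(-\pi) = -(-\pi) = \pi,
\]
which gives the required identity $\bbar{\varphi}_{-\pi}=\theta_K(\pi)$. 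Thus $s$ is indeed a section.

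There is no real obstacle here; the corollary is a direct repackaging of Theorem \ref{th:main}. The only point that requires explicit articulation is canonicity: although the field $HW_{(K,-\pi)}(\bbar{\FF}_q)$ and the element $\varphi_{-\pi}$ both depend on the uniformizer $-\pi$, the conjugacy class of $\varphi_{-\pi}$ in $W_K$ depends only on $\pi$ (and on $K$), so the map $s$ is well-defined without any auxiliary choices. The construction of the whole section is entirely internal to the local field $K$ and the Hahn--Witt formalism, which is what makes the existence of a canonical section, as opposed to an abstract set-theoretic one, the content of the corollary.
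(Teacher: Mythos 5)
Your proposal is correct and matches the paper's intent: the paper offers no separate argument, treating the corollary as an immediate consequence of Theorem \ref{th:main}, and your derivation (apply the theorem to the uniformizer $-\pi$ to get $\theta_K^{-1}(\bbar{\varphi}_{-\pi})=\can_K(-\pi)=\pi$, hence $\bbar{\varphi}_{-\pi}=\theta_K(\pi)$, plus the observation that the conjugacy class of $\varphi_{-\pi}$ is independent of the identification of separable closures) is exactly this repackaging. No gaps worth noting.
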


It would be interesting to understand more about the conjugacy classes in $W_K^{\deg=1}$ coming from $\varphi_{\pi}.$

\begin{remark} Using Lubin-Tate theory \cite{LT65} and Hazewinkel's classification of the logarithms of Lubin-Tate formal group laws \cite[Proposition I.8.3.6]{Haz78}, we observe that Theorem \ref{th:main} is equivalent to the following: the zeroes of the function $$H(x)=x-\frac{x^q}{\pi}+\frac{x^{q^2}}{\pi^2}-\frac{x^{q^{3}}}{\pi^3}+\dots$$
in (the maximal ideal of the valuation ring of) $HW_{(K,\pi)}(\bar{\FF}_q)$ are fixed by $\varphi_{\pi}.$ In other words, these zeroes are contained $HW_{(K,\pi)}(\FF_q).$ Again, the computational approach to this statement is unrealistic, unless $\cha K=p.$ 
\end{remark}

We will prove Theorem \ref{th:main} in Section \ref{sec:proof_of_main}, using essentially only the standard results from local class field theory, Lubin-Tate theory and local ramification theory, together with the Krasner's lemma.

{\noindent{\bf Notation.}} {\it All the valuations below will take values in $\Q\cup\{\infty\},$ and they are not normalized unless otherwise stated. The valuations will be denoted by $v.$ In particular, for a discrete valuation field $K$ with a uniformizer $\pi,$ in general we do not require that $v(\pi)=1$ unless we choose to do so.}

{\noindent{\bf Acknowledgements.}} I am grateful to Robert Burklund and Achim Krause for introducing me to Hahn-Witt series and for suggesting the Question \ref{ques:u_p}.

\section{Hahn series and $\pi$-typical Hahn-Witt series}
\label{sec:Hahn_series}
 
In this section we first give some background on Hahn series \cite{Hahn95, Kr32, Pa77}, then we define the $\pi$-typical Hahn-Witt series (Definition \ref{def:Hahn_Witt}). We also summarize the properties of $\pi$-typical Hahn-Witt series of a perfect field (Theorem \ref{th:Poonen}). Almost all the statements in this section, including Theorem \ref{th:Poonen}, are obtained by a straightforward generalization of results from \cite{P93}. 
   
For a commutative ring $R$ we have a ring of Hahn series $R((t^{\Q})).$ It is formed by expressions $f=\sum\limits_{i\in\Q} a_i t^i,$ where $a_i\in R$ and the support $$\supp(f):=\{i\in\Q: a_i\ne 0\}$$ is well-ordered. The sum and product of such expressions are well-defined. Namely, by \cite[Lemma 2.9]{Pa77} for well-ordered subsets $A,B\subset\Q,$ the sets $A\cup B$ and $A+B$ are also well-ordered, and the map
$$A\times B\to A+B,\quad (a,b)\mapsto a+b,$$
has finite fibers. Moreover, the subring $R[[t^{\Q}]]\subset R((t^{\Q}))$ (formed by expressions with non-negative exponents) is $t$-adically complete. Just like in the case of usual power series, the Jacobson radical $\rad(R[[t^{\Q}]])$ consists of Hahn series whose constant term is contained in $\rad(R).$ 

Suppose that $r\in R$ is a non-zero-divisor such that $R$ is $r$-adically complete. Following the terminology of \cite[Section 4]{P93} (in the case $r=p$), we say that an element $f=\sum\limits_i a_i t^i\in R((t^{\Q}))$ is a null series (with respect to $r$) if for any $i\in\Q$ we have
$$\sum\limits_{n\in\Z} a_{i+n}r^n=0\in R[r^{-1}].$$ The same proof as in \cite[Proposition 3]{P93} shows that the null series form an ideal in $R((t^{\Q})).$ 
Moreover, we observe that this ideal is principal, and it is generated by the non-zero-divisor $(t-r).$ Namely, if $f=\sum\limits_i a_i t^i$ is a null-series, then we have
$$\frac{f}{t-r}=\sum\limits_{i\in\Q}\left(\sum\limits_{n\in\Z_{\geq 0}}a_{i+n+1}r^n\right)t^i.$$ Note that $\frac{f}{t-r}$ is a well-defined element of 
$R((t^{\Q})),$ since $\supp(\frac{f}{t-r})\subset\supp(f)+\N,$ and the latter set is well-ordered. 

Choose a set-theoretic section $s:R/r\to R,$ such that $s(0)=0.$ Then the same proof as in \cite[Proposition 4]{P93} shows that any element of $R((t^{\Q}))/(t-r)$ has a unique representative of the form
\begin{equation}\label{eq:representative_of_series} \sum\limits_{i\in\Q}s(b_i) t^i\in R((t^{\Q})),\quad b_i\in R/x,\quad \{i\in\Q: b_i\ne 0\}\text{ is well-ordered}.\end{equation}

\begin{defi}
We denote by $R((r^{\Q}))$ the quotient $R((t))^{\Q}/(t-r),$ assuming that $r\in R$ is a non-zero-divisor such that $R$ is $r$-adically complete.
We will write $\sum\limits_i s(b_i)r^i$ for the image of the element \eqref{eq:representative_of_series}. 
\end{defi} 

Note that the assignment $(R,r)\mapsto R((r^{\Q}))$ is functorial, and we have a natural inclusion $R[r^{-1}]\hto R((r^{\Q})).$ The following (almost tautological) observation will be very useful.

\begin{prop}\label{prop:extracting_a_root} Let $R$ be a commutative ring and let $r\in R$ be a non-zero-divisor such that $R$ is $r$-adically complete. Choose a section $s:R/r\to R,$ such that $s(0)=0.$  Denote by $R'=R[r^{\frac1n}]$ the ring $R[X]/(X^n-r),$ where $n>0$ is an integer. We identify $R/r\cong R'/r^{\frac1n},$ and we choose the map $R/r\xto{s} R\to R'$ as a section. Then we have a natural isomorphism $$R((r^{\Q}))\xto{\sim} R'(((r^{\frac1n})^{\Q})),\quad \sum\limits_i s(b_i)r^i\mapsto \sum\limits_i s(b_i)(r^{\frac1n})^{ni}.$$\end{prop}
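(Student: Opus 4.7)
The plan is to realize the desired map as the quotient of a natural ``substitution'' homomorphism between the enveloping Hahn-series rings $R((t^{\Q}))$ and $R'((u^{\Q}))$, corresponding to the change of variable $t=u^n$. Once this substitution is in place, all the content of the proposition is packaged in the uniqueness of canonical representatives and in the fact that $i\mapsto ni$ is an order-preserving bijection of $\Q$.

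First, I would define $\Phi\colon R((t^{\Q}))\to R'((u^{\Q}))$ by $\Phi(\sum_{i\in\Q}a_it^i)=\sum_{i\in\Q}a_iu^{ni}$, using the inclusion $R\hookrightarrow R'$ on coefficients. This is well-defined because $i\mapsto ni$ is an order-preserving bijection of $\Q$, so it sends well-ordered subsets to well-ordered subsets; and it is a ring homomorphism because the convolution formula for multiplication of Hahn series is preserved under an additive bijection of the exponent set whose induced map $A\times B\to A+B$ still has finite fibers (which is the content of \cite[Lemma 2.9]{Pa77} applied on both sides). Next, $\Phi(t-r)=u^n-r=u^n-(r^{\frac{1}{n}})^n$, and this factors in $R'[u]$ as $(u-r^{\frac{1}{n}})\cdot\sum_{k=0}^{n-1}u^{n-1-k}(r^{\frac{1}{n}})^k$. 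Hence $\Phi(t-r)$ lies in the ideal generated by $u-r^{\frac{1}{n}}$ inside $R'((u^{\Q}))$, so $\Phi$ descends to a ring homomorphism $\bar\Phi\colon R((r^{\Q}))\to R'(((r^{\frac{1}{n}})^{\Q}))$.

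Second, I would verify the explicit formula on canonical representatives. The canonical representative $\sum_i s(b_i)r^i$ of an element of $R((r^{\Q}))$ lifts tautologically to $\sum_i s(b_i)t^i\in R((t^{\Q}))$, whose image under $\Phi$ is $\sum_i s(b_i)u^{ni}\in R'((u^{\Q}))$. Under the chosen section $R'/r^{\frac{1}{n}}\cong R/r\xrightarrow{s}R\hookrightarrow R'$, each coefficient $s(b_i)\in R\subset R'$ is in the image of the section, and the support $\{ni:b_i\ne 0\}$ is well-ordered as the image of $\{i:b_i\ne 0\}$ under an order-preserving bijection. Thus this expression is already a canonical representative in $R'(((r^{\frac{1}{n}})^{\Q}))$, giving the asserted formula $\sum_i s(b_i)(r^{\frac{1}{n}})^{ni}$.

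Finally, bijectivity of $\bar\Phi$ follows immediately from the uniqueness of canonical representatives on both sides: any canonical form $\sum_j s(c_j)(r^{\frac{1}{n}})^j$ on the target is the $\bar\Phi$-image of the unique canonical form $\sum_i s(c_{ni})r^i$ on the source, since $j\mapsto j/n$ is again an order-preserving bijection of $\Q$ sending well-ordered subsets to well-ordered subsets. I do not expect any serious obstacle here; the only step requiring a little care is confirming that the substitution $\Phi$ is indeed a ring homomorphism at the level of Hahn series (as opposed to polynomials), but that reduces to the finite-fibers property of $A\times B\to A+B$ recalled at the beginning of Section~\ref{sec:Hahn_series}.
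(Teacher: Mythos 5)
Your proposal is correct, but it runs in the opposite direction from the paper's argument and rests on a different key ingredient. You construct the forward substitution homomorphism $\Phi\colon R((t^{\Q}))\to R'((u^{\Q}))$, $t\mapsto u^{n}$, observe that it sends $t-r$ to $u^{n}-r=(u-r^{\frac1n})(u^{n-1}+u^{n-2}r^{\frac1n}+\dots+(r^{\frac1n})^{n-1})$ so that it descends to the quotients, and then obtain bijectivity from the uniqueness of the canonical representatives \eqref{eq:representative_of_series} (the analogue of \cite[Proposition 4]{P93}) on both sides, combined with the fact that $i\mapsto ni$ is an order-preserving bijection of $\Q$. The paper instead writes down the inverse directly as a chain of quotient identifications,
$R'(((r^{\frac1n})^{\Q}))=R'((t^{\Q}))/(t-r^{\frac1n})\cong R((t^{\Q}))[X]/(X^{n}-r,\,t-X)\cong R((t^{\Q}))/(t^{n}-r)=R((r^{\Q}))$,
using that $R'$ is finite free over $R$ (so Hahn series commute with $-\otimes_{R}R'$) and that $t$ and $t^{n}$ generate the same set of monomials $t^{\Q}$; no appeal to canonical representatives is needed there. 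Your route has the advantage of making the stated formula on representatives completely explicit, which the paper leaves implicit; the small price is that the uniqueness statement you invoke on the target side presupposes that $r^{\frac1n}$ is a non-zero-divisor in $R'$ and that $R'$ is $r^{\frac1n}$-adically complete --- both immediate from $R'$ being finite free over $R$ with $(r^{\frac1n})^{n}=r$, and in any case needed just to make sense of $R'(((r^{\frac1n})^{\Q}))$, but worth a sentence in your write-up.
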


\begin{proof} Indeed, the inverse isomorphism is given by
\begin{multline*} R'(((r^{\frac1n})^{\Q}))= R'((t^\Q))/(t-r^{\frac1n})\cong R((t^{\Q}))[X]/(X^n-r,t-X)\\ \cong R((t^{\Q}))/(t^n-r)=R((r^{\Q})).\qedhere\end{multline*}\end{proof}

We now define the $\pi$-typical Hahn-Witt series. Recall that we denote by $W_{(K,\pi)}(-)$ the $\pi$-typical Witt vectors, as defined in \cite{Dr76}.

\begin{defi}\label{def:Hahn_Witt} Let $K$ be a local field with a chosen uniformizer $\pi,$ and we identify $\cO_K/\pi$ with $\FF_q.$ For a perfect $\FF_q$-algebra $R,$ we define the $K$-algebra of Hahn-Witt series of $R$ as follows:
$$HW_{(K,\pi)}(R):=W_{(K,\pi)}(R)((\pi^{\Q}))=W_{(K,\pi)}(R)((t^{\Q}))/(t-\pi).$$\end{defi}

Choosing the section $[\cdot]:R\cong W_{(K,\pi)}(R)/\pi\to W_{(K,\pi)}(R)$ to be the unique multiplicative section (given by the Teichm\"uller representatives), we see that the elements of $HW_{(K,\pi)}(R)$ are expressions of the form 
$$\sum\limits_{i\in\Q}[a_i] \pi^i,\quad a_i\in R,\quad \{i\in\Q: a_i\ne 0\}\text{ is well-ordered}.$$
 
A straightforward generalization of the results of \cite[Sections 4 and 5]{P93} gives the following.

\begin{theo}\label{th:Poonen} \cite{P93} Let $K$ and $\pi$ be as above, and let $E$ be a perfect field, which is an extension of $\FF_q.$

1) The $K$-algebra $HW_{(K,\pi)}(E)$ is a field.

2) The valuation $K\to\Z\cup\{\infty\},$ sending $\pi$ to $1,$ extends to a valuation $v:HW_{(K,\pi)}(E)\to\Q\cup\{\infty\},$ given by $$v(\sum\limits_i [a_i] \pi^i)=\inf\{i\in\Q: a_i\ne 0\},$$ where $\inf(\emptyset)=\infty.$ The inclusion $K\subset HW_{(K,\pi)}(E)$ induces an isomorphism on the residue fields. 

3) $HW_{(K,\pi)}(E)$ is a maximally complete valued field, i.e. it has no immediate extensions other than itself. Recall that if $F$ is a field with a valuation $v:F\to\Q\cup \{\infty\},$ then an immediate extension of $F$ is a field extension $F'$ of $F$ with a valuation $v':F'\to\Q\cup\{\infty\}$ such that $v'_{\mid F}=v,$ and $F$ and $F'$ have the same value group and the same residue field.

4) If $E$ is algebraically closed, then $HW_{(K,\pi)}(E)$ is also algebraically closed.\end{theo}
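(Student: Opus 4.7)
The plan is to adapt Poonen's arguments in \cite[Sections 4--5]{P93} essentially verbatim, replacing the pair $(\Q_p, p)$ by $(K, \pi)$ and ordinary Witt vectors by $\pi$-typical Witt vectors throughout. I would treat (1) and (2) together, then prove (3), and deduce (4) from (3) by a standard theorem on maximally complete fields.

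For (1) and (2), I would work directly with the unique Teichm\"uller representation $f = \sum_i [a_i]\pi^i$ established in \eqref{eq:representative_of_series}, and define $v(f) := \inf\{i : a_i \ne 0\}$. The key observation is that a nonzero element $f$ with leading term $[a_{i_0}]\pi^{i_0}$ is a unit: since $E$ is a field, $a_{i_0}$ is a unit in $E$, hence $[a_{i_0}]$ is a unit in $W_{(K,\pi)}(E)$, and $\pi^{i_0}$ is invertible in $HW_{(K,\pi)}(E)$. Writing $f = [a_{i_0}]\pi^{i_0}(1+g)$ with $v(g) > 0$, the geometric series $(1+g)^{-1} = \sum_{n \ge 0}(-g)^n$ converges in $HW_{(K,\pi)}(E)$ because finite Minkowski sums of a well-ordered subset of $\Q_{>0}$ form a well-ordered subset of $\Q$ with finite fibers (the basic Hahn lemma). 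This yields (1), multiplicativity of $v$, and the ultrametric inequality simultaneously; the residue field identification is immediate from the representation.

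For (3), I would invoke Kaplansky's criterion: maximal completeness is equivalent to every pseudo-Cauchy sequence having a pseudo-limit. Given a pseudo-Cauchy sequence $\{f_n\}$ with breakpoints $\beta_n = v(f_{n+1} - f_n)$ eventually strictly increasing, each Teichm\"uller coefficient $a_{n,\alpha}$ of $f_n$ stabilizes to some $a_\alpha \in E$ as soon as $\beta_n > \alpha$. The candidate pseudo-limit is $f = \sum_\alpha [a_\alpha]\pi^\alpha$, and the crux is to check that its support is well-ordered; this follows as in \cite[Theorem 1]{P93} because the support is contained in a controlled union of initial segments of the supports of the $f_n$. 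Part (4) then follows from (3) by Kaplansky's classical theorem that a maximally complete valued field whose residue field is algebraically closed and whose value group is divisible is itself algebraically closed; alternatively, one can argue directly as in \cite[Proposition 6]{P93}: any proper finite extension of $HW_{(K,\pi)}(E)$ must be immediate (the residue field $E$ is already algebraically closed and the value group $\Q$ is already divisible), contradicting maximal completeness.

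The main obstacle I anticipate is (3): the pseudo-Cauchy argument requires careful bookkeeping and a verification that the $\pi$-typical Witt vector arithmetic interacts correctly with the well-ordered support structure, particularly in mixed characteristic $\cha K = 0$. However, since the $\pi$-typical formalism mirrors the classical one via the substitution $p \mapsto \pi$, $x^p \mapsto x^q$ in the ghost polynomials, no genuinely new ideas beyond Poonen's are needed, and the adaptation is indeed straightforward as the author asserts.
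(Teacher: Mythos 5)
Your proposal matches the paper's approach: the paper's proof consists of citing Poonen's results \cite[Corollary~3, Proposition~5, Theorem~1, Proposition~6]{P93} and observing that his arguments carry over verbatim under the substitutions $p\mapsto\pi$ and $W\mapsto W_{(K,\pi)}$, which is exactly your plan with the expected details filled in (geometric-series inversion via Neumann's lemma for parts (1)--(2), pseudo-Cauchy completion for part (3), immediate-extension argument for part (4)). The only small inaccuracy is your attribution of part (4) to a theorem of Kaplansky; the paper invokes \cite[Proposition~6]{P93}, which is the direct argument you give as your alternative.
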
 

\begin{proof} We give references to the corresponding results from \cite{P93}. The proofs of 1)-3) are obtained from the proofs from \cite{P93} by replacing $p$ with $\pi,$ and restricting to the case $G=\Q$ (of course, all the statements are still valid for a general totally ordered value group $G$ containing $\Z,$ which must be divisible in part 4)). Part 4) follows from 3) by \cite[Proposition 6]{P93}.

Part 1) is a generalization of \cite[Corollary 3]{P93}. Part 2) is a generalization of \cite[Proposition 5]{P93}. Part 3) is a generalization of \cite[Theorem 1]{P93}.\end{proof}

As mentioned in the introduction, we will be interested in the canonical automorphism 
$$\varphi_{\pi}:HW_{(K,\pi)}(\bbar{\FF}_q)\to HW_{(K,\pi)}(\bbar{\FF}_q), \quad \varphi_{\pi}(\sum\limits_i [a_i]\pi^i)=\sum\limits_i [a_i^q]\pi^i,$$
especially in its action on the maximal abelian extension $K^{\ab}$ of $K.$

If $K=\Q_p$ and $\pi=p,$ then we write simply $HW(-)$ and $\varphi$ instead of $HW_{(K,\pi)}(-)$ and $\varphi_{\pi},$ assuming the choice of a prime $p.$ 

\begin{remark}If $K$ is a finite extension of $\Q_p,$ then the fields $HW_{(K,\pi)}(\bbar{\FF}_q)$ and $HW(\bbar{\FF}_p)$ are non-canonically isomorphic as valued fields by \cite[Corollary 6]{P93}. Namely, there is only one maximally complete field of characteristic zero with value group $\Q$ and residue field $\bbar{\FF}_p$ (up to a non-canonical isomorphism). Moreover, the group of continuous automorphisms of $HW(\bbar{\FF}_p)$ is huge, even over $\C_p,$ see \cite[Proposition 11]{P93}.\end{remark}

\section{Some computations}
\label{sec:computations}

This section, which can be skipped on the first reading, contains some computations of the $p$-power roots of unity in $HW(\bbar{\FF}_p).$ These are mostly to demonstrate that it is probably impossible to prove Theorem \ref{th:main_for_Hahn_Witt} by a direct approach. Some of the computations below were done previously by Lampert \cite{La86}, however the formulas in \cite[Theorem 1]{La86} are incorrect (see below). Also, some of the computations for $p=2$ below were done recently by Achim Krause (private communication). Recall  from the introduction that $u_p$ denotes the unique $p$-adic integer such that $\varphi(\zeta)=\zeta^{u_p},$ where $\zeta$ runs through $p$-power roots of unity in $HW(\bbar{\FF}_p).$

Looking at the Newton polygons, we see that to prove that some $p$-power root of unity is contained in $HW(\FF_{p^n})$ for a particular $n,$ it suffices to check this for the coefficients of $p^i$ for $i\leq\frac{1}{p-1}.$ Indeed, for higher degrees the coefficients are uniquely determined by successive approximation, and they will automatically be (Teichm\"uller representatives of elements) in $\FF_{p^n}.$ We refer to \cite{La86} for details. 

Now, assuming $p>2,$ we first compute $$\sqrt[p]{1}=\sum\limits_{i=0}^{p-1}\left[\frac{a^i}{i!}\right]\cdot p^{\frac{i}{p-1}}+\left[a\cdot\frac{1+(p-1)!}{p}\right]\cdot p^{\frac{p}{p-1}}+(\text{higher order terms}),$$
where $a\in \FF_{p^2},$ $a^{p-1}=-1.$ This is simply the computation within the finite extension $W(\FF_{p^2})[p^{\pm \frac{1}{p-1}}]$ of $\Q_p.$
In particular, we have $\sqrt[p]{1}\in HW(\FF_{p^2}).$ Since $a^p=-a,$ we have $u_p\equiv -1\text{ mod }p.$ We remark that the formula for $\sqrt[p]{1}\text{ mod }p$ in \cite[Theorem 1]{La86} is incorrect for $p\geq 5:$ this can be checked by raising it to the power $p$ and looking at the coefficient of $p^{\frac{p+2}{p-1}}$ (which equals $[\frac{\zeta_1^{3}}{12}]$ in the notation of loc. cit.). 

Knowing the expression for $\sqrt[p]{1}$ up to degree $p^{\frac{p}{p-1}}$ allows to compute the expression for (one of the primitive roots) $\sqrt[p^2]{1}$ up to degree $p^{\frac{1}{p-1}}.$ We obtain:
$$\sqrt[p^2]{1}=\sum\limits_{i=0}^{p-1}\left[\frac{(-a)^i}{i!}\right]\cdot p^{\frac{i}{p(p-1)}}+\sum\limits_{n=2}^{\infty} [-a]\cdot p^{\frac{1}{p-1}-\frac{1}{p^n}}+0\cdot p^{\frac{1}{p-1}}+(\text{higher order terms}).$$
In particular, $\sqrt[p^2]{1}\in HW(\FF_{p^2})$ and $u_p\equiv -1\text{ mod }p^2.$ However, computing $\sqrt[p^3]{1}$ up to degree $p^{\frac{1}{p-1}}$ seems to be just too difficult, and for general $p$-power roots of unity this seems to be simply impossible. Assuming Theorem \ref{th:main_for_Hahn_Witt}, we can only say that $\sqrt[p^n]{1}\in HW(\FF_{p^2})$ for all $n.$  

For $p=2$ the computational approach gives a slightly better result, i.e. $u_2\equiv -1\text{ mod }16,$ or equivalently $\sqrt{-1}\not\in HW(\FF_2)$ and $\sqrt[16]{-1}\in HW(\FF_4).$ First, Lampert \cite{La86} computed: $$\sqrt{-1}=(2^{\frac12}+2^{\frac34}+2^{\frac78}\dots)+[a]\cdot 2^1+(\text{higher order terms}),$$
where $a\in\FF_4,$ $a^2+a=1.$ We see that $\sqrt{-1}\not\in HW(\FF_2)$ and $u_2\equiv -1\text{ mod }4.$ We also see that for any non-zero element $x\in HW(\FF_2),$ either $x$ or $-x$ is a square, but not both. This follows from Kummer theory, since the Galois group of $HW(\FF_2)$ is isomorphic to $\hat{\Z}.$


Next, we can directly compute that $\sqrt{1+2^{\frac12}}$ is contained in $HW(\FF_2):$
$$\sqrt{1+2^{\frac12}}=1+\sum\limits_{n=2}^{\infty}2^{1-\frac{3}{2^n}}+\sum\limits_{n=4}^{\infty}2^{1-\frac{1}{2^n}}+0\cdot 2^1+(\text{higher order terms}).$$
Now, either $2+\sqrt{2+\sqrt{2}}$ or $-2+\sqrt{2+\sqrt{2}}$ is a square in $HW(\FF_2),$ but not both of them, since their product equals $-\sqrt{2}(1+\sqrt{2})^{-1},$ which is not a square. We obtain
$$\sqrt[16]{-1}=\frac{1}{2}\sqrt{\pm 2+\sqrt{2+\sqrt{2}}}+\frac{1}{2}\sqrt{\pm 2-\sqrt{2+\sqrt{2}}}\cdot\sqrt{-1}.$$
In particular, $u_2\equiv -1\text{ mod }16.$ Moreover, $u_2\equiv -1\text{ mod }32$ if and only if $2+\sqrt{2+\sqrt{2}}$ is a square in $HW(\FF_2).$ This is the case by Theorem \ref{th:main_for_Hahn_Witt}, but computing the explicit Hahn-Witt series expression for $\sqrt{2+\sqrt{2+\sqrt{2}}}$ seems to be too difficult, even if we try do it only up to the relevant degree $2^{\frac98}$.

\begin{remark}\label{remark:p=2_via_square_roots} In fact, Theorem \ref{th:main_for_Hahn_Witt} for $p=2$ is equivalent to the following: all the square roots
$$\sqrt{2+\sqrt{2+\dots\sqrt{2}}}$$
are contained in $HW(\FF_2).$\end{remark}

\section{Local class field theory}
\label{sec:CFT}

In this section we will recall only the main statements from the local class field theory, that is, the properties of the local Artin reciprocity maps. We refer to \cite{CF67, N99, M20} for a detailed study. 

We will deduce a very useful relation between the uniformizers $\can_K(\pi),$ introduced in Theorem \ref{th:main}, for different pairs $(K,\pi).$ The precise statement is Proposition \ref{prop:can_related_via_Norm}.

 Let $K$ be a local field, and denote by $\mk$ its (finite) residue field. Denote by $v:K^*\to\Z$ the surjective discrete valuation. Denote by $G_K$ the absolute Galois group of $K,$ and denote by $G_K^{\ab}$ (the profinite completion of) its abelianization. Then $G_K^{\ab}$ is the Galois group of the maximal abelian extension $K^{\ab}$ of $K.$ 

It is convenient to replace the group $G_K$ with the Weil group $W_K\subset G_K.$ By definition, $W_K$ is the preimage of $\Z\subset \hat{\Z}$ under the natural map $G_K\to G_{\mk}\cong\hat{\Z},$ and similarly for $W_K^{\ab}\subset G_K^{\ab}.$ The induced map $\deg:W_K\to\Z$ (which of course factors through $W_K^{\ab}$) is called the degree map. 

The main statement of local class field theory is the existence of a natural isomorphism, called Artin reciprocity map:
$$\theta_K:K^*\xto{\sim}W_K^{\ab}.$$
The degree map $\deg:W_K^{\ab}\to\Z$ corresponds to the valuation:
$$
\begin{CD}
K^* @>v>> \Z\\
@V\theta_KVV @|\\
W_K^{\ab} @>\deg>> \Z
\end{CD}
$$
Given a finite extension $L/K,$ the natural map $W_L^{\ab}\to W_K^{\ab}$ corresponds to the norm map:
\begin{equation}
\label{eq:comm_square_norm}
\begin{CD}
L^* @>\Nm_{L/K}>> K^*\\
@V\theta_L VV @V\theta_K VV\\
W_L^{\ab} @>>> W_K^{\ab}.
\end{CD}
\end{equation}
The inclusion $K^*\to L^*$ corresponds to the transfer map:
$$
\begin{CD}
K^* @>>> L^*\\
@V\theta_K VV @V\theta_LVV\\
W_K^{\ab} @>T>> W_L^{\ab}.
\end{CD}
$$
Recall that for an inclusion of (say, abstract) groups $H\subset G$ of finite index, the transfer map $T:G^{\ab}\to H^{\ab}$ is given by the composition
$$T:G^{\ab}\cong \Tor_1^{\Z[G]}(\Z,\Z)\to\Tor_1^{\Z[G]}(\Z,\Ind_H^G(\Z))\cong \Tor_1^{\Z[H]}(\Z,\Z)\cong H^{\ab}.$$

Now, let $K$ be a local field with a uniformizer $\pi,$ and we identify $\cO_K/\pi\cong\FF_q.$ Recall that the uniformizer $\can_K(\pi)\in K,$ introduced in Theorem \ref{th:main}, is characterized by the following: the action of $\theta_K(\can_K(\pi))$ on $K^{\ab}\subset HW_{(K,\pi)}(\bbar{\FF}_q)$ is the same as the action of $\varphi_{\pi}\in\Aut_K(HW_{(K,\pi)}(\bbar{\FF}_q)).$

\begin{prop}\label{prop:can_related_via_Norm} Consider the extension $K'=K(\pi^{\frac1n})$ of $K,$ for some $n>0.$ Then we have \begin{equation}\label{eq:can_related_via_Norm} \Nm_{K'/K}(\can_{K'}(\pi^{\frac1n}))=\can_K(\pi).\end{equation}\end{prop}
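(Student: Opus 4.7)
The plan is to identify the two Hahn-Witt fields $HW_{(K,\pi)}(\bbar{\FF}_q)$ and $HW_{(K',\pi^{1/n})}(\bbar{\FF}_q)$ via Proposition \ref{prop:extracting_a_root}, and then deduce \eqref{eq:can_related_via_Norm} from the functoriality of the local Artin reciprocity map as encoded in the commutative square \eqref{eq:comm_square_norm}.

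First, since $K'=K(\pi^{1/n})$ is totally ramified over $K$, the residue field of $K'$ is still $\FF_q$, so $W_{(K',\pi^{1/n})}(\bbar{\FF}_q)$ is defined. It is a $\pi^{1/n}$-complete, $\pi^{1/n}$-torsion-free $\cO_{K'}$-algebra lifting $\bbar{\FF}_q$, and hence is canonically identified with $W_{(K,\pi)}(\bbar{\FF}_q)[\pi^{1/n}]$ by uniqueness of such a lift. Applying Proposition \ref{prop:extracting_a_root} with $R=W_{(K,\pi)}(\bbar{\FF}_q)$, $r=\pi$, and the Teichm\"uller section $[\cdot]$ (which is preserved under base change along $R\to R[\pi^{1/n}]$, by uniqueness of the multiplicative section), I obtain a canonical isomorphism of $K'$-algebras
$$HW_{(K,\pi)}(\bbar{\FF}_q)\xto{\sim} HW_{(K',\pi^{1/n})}(\bbar{\FF}_q).$$

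Second, under this isomorphism the automorphisms $\varphi_\pi$ and $\varphi_{\pi^{1/n}}$ coincide: both fix $\pi^{1/n}$ and send $[a]\mapsto [a^q]$ on Teichm\"uller representatives (the residue field of $K'$ being still $\FF_q$). Restricting this common automorphism to the separable algebraic closure of $K$ inside the Hahn-Witt field, which equals the separable closure of $K'$ and contains both $K^{\ab}$ and $(K')^{\ab}$, produces a single element $\tilde\varphi\in G_{K'}\subset G_K$ of degree $1$. By the very definitions of $\can_K(\pi)$ and $\can_{K'}(\pi^{1/n})$, the images of $\tilde\varphi$ are $\theta_K(\can_K(\pi))\in W_K^{\ab}$ and $\theta_{K'}(\can_{K'}(\pi^{1/n}))\in W_{K'}^{\ab}$, respectively.

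Finally, since the natural map $W_{K'}^{\ab}\to W_K^{\ab}$ sends the class of $\tilde\varphi$ in $W_{K'}^{\ab}$ to its class in $W_K^{\ab}$, the commutative square \eqref{eq:comm_square_norm} gives
$$\theta_K\bigl(\Nm_{K'/K}(\can_{K'}(\pi^{1/n}))\bigr)=\theta_K(\can_K(\pi)),$$
from which \eqref{eq:can_related_via_Norm} follows by injectivity of $\theta_K$. The only delicate step worth double-checking is the matching of $\varphi_\pi$ with $\varphi_{\pi^{1/n}}$ under the identification of Proposition \ref{prop:extracting_a_root}; once this is in place the remainder is essentially a formal manipulation of the CFT diagrams.
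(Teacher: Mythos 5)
Your proposal is correct and follows essentially the same route as the paper: identify $HW_{(K,\pi)}(\bbar{\FF}_q)$ with $HW_{(K',\pi^{1/n})}(\bbar{\FF}_q)$ via Proposition \ref{prop:extracting_a_root}, observe that $\varphi_{\pi}$ and $\varphi_{\pi^{1/n}}$ match under this identification, and conclude from the norm-compatibility square \eqref{eq:comm_square_norm}. The only difference is that you spell out the identification $W_{(K',\pi^{1/n})}(\bbar{\FF}_q)\cong W_{(K,\pi)}(\bbar{\FF}_q)[\pi^{1/n}]$ and the passage through a common lift $\tilde\varphi$ of degree $1$, details the paper leaves implicit.
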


\begin{proof}
By Proposition \ref{prop:extracting_a_root}, we have a natural isomorphism \begin{equation}\label{eq:iso_for_obvious_extensions} \psi:HW_{(K,\pi)}(\bbar{\FF}_q)\xto{\sim}HW_{(K',\pi^{\frac1n})}(\bbar{\FF}_q),\quad \psi(\sum\limits_i [a_i]\pi^i)=\sum\limits_i [a_i](\pi^{\frac1n})^{ni},\end{equation}
where $a_i\in\bbar{\FF}_q.$
Moreover, $\psi$ is compatible with the canonical automorphisms $\varphi_{\pi}$ and $\varphi_{\pi^{\frac1n}}.$ Hence, the equality \eqref{eq:can_related_via_Norm} follows from the commutative square \eqref{eq:comm_square_norm}, applied to the extension $K'/K.$
\end{proof}

\section{Ramification theory}
\label{sec:ramification}

We will need only one well known statement about the norm map for a particular wildly ramified extension. Let $K$ be a local field of characteristic $0$ and with the residue characteristic $p.$ Choose a uniformizer $\pi\in K.$ Consider the extension $K'=K(\pi^{\frac1p})$ of $K.$ 

\begin{lemma}\label{lem:norms_for_wildly_ramified} Let $n$ and $k$ be positive integers, such that $$n\leq k,\quad (n-\frac{k}{p})v(\pi)\leq v(p).$$ Then we have
$$\Nm_{K'/K}(1+\pi^{\frac{k}{p}}\cO_{K'})\subset 1+\pi^n\cO_K.$$\end{lemma}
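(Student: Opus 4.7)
The plan is to expand
\[
\Nm_{K'/K}(1+x) = \prod_{i=0}^{p-1}(1 + \sigma_i x) = 1 + \sum_{j=1}^{p} e_j,
\qquad e_j := e_j(\sigma_0 x,\ldots,\sigma_{p-1} x) \in K,
\]
where $\sigma_0,\ldots,\sigma_{p-1}\colon K'\hookrightarrow \bar K$ are the $p$ embeddings over $K$ and $e_j$ denotes the $j$-th elementary symmetric polynomial, and then to bound each $v(e_j)$ separately. It suffices to show $v(e_j)\ge n\,v(\pi)$ for all $1\le j\le p$.

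First I would control the power sums $p_j := \sum_i (\sigma_i x)^j = \Tr_{K'/K}(x^j)$. Setting $\pi':=\pi^{1/p}$, the conjugates of $\pi'$ in $\bar K$ are $\zeta_p^i\pi'$, which gives
\[
\Tr_{K'/K}\bigl((\pi')^m\bigr) = (\pi')^m\sum_{i=0}^{p-1}\zeta_p^{im}
= \begin{cases} p\,(\pi')^m, & p\mid m,\\ 0, & p\nmid m.\end{cases}
\]
For $y\in(\pi')^m\cO_{K'}$, writing $y=\sum_{r=0}^{p-1}c_r(\pi')^{m+r}$ with $c_r\in\cO_K$, only the unique $r_0\in\{0,\ldots,p-1\}$ with $p\mid m+r_0$ contributes, giving $v(\Tr_{K'/K}(y))\ge v(p)+(m+r_0)\,v(\pi)/p\ge v(p)+m\,v(\pi)/p$. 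Taking $y=x^j\in(\pi')^{jk}\cO_{K'}$ yields $v(p_j)\ge v(p)+jk\,v(\pi)/p$.

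Next I would apply Newton's identities $j\,e_j = \sum_{i=1}^{j}(-1)^{i-1}e_{j-i}\,p_i$ (with $e_0=1$) by induction on $j$. For $1\le j\le p-1$ we have $\gcd(j,p)=1$, so $v(j)=0$; the inductive bounds $v(e_{j-i})+v(p_i)\ge 2v(p)+jk\,v(\pi)/p$ for $1\le i<j$, together with $v(p_j)\ge v(p)+jk\,v(\pi)/p$, force $v(e_j)\ge v(p)+jk\,v(\pi)/p$. For $j=p$ I would bypass Newton and compute $e_p$ directly: $e_p=\prod_i \sigma_i x = \Nm_{K'/K}(x)$, so $v(e_p)=p\,v(x)\ge k\,v(\pi)$.

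Combining, $v(\Nm_{K'/K}(1+x)-1)\ge\min\bigl(v(p)+k\,v(\pi)/p,\ k\,v(\pi)\bigr)$, and the two hypotheses $n\le k$ and $(n-k/p)\,v(\pi)\le v(p)$ are exactly what is needed for this minimum to be $\ge n\,v(\pi)$, proving the lemma. The one delicate point is the case $j=p$: Newton's identity there picks up an unwanted factor of $p$ on the left-hand side, but recognizing $e_p$ directly as a norm sidesteps this cleanly. Note that no assumption $\mu_p\subset K$ is needed since the trace formula above was derived using conjugates in $\bar K$.
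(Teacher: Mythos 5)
Your proof is correct and takes a genuinely different route from the paper's.

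The paper's proof reduces to elementary factors: it notes that every element of $1+\pi^{k/p}\cO_{K'}$ is a convergent product of terms $1-[a]\pi^{m/p}$ with $m\geq k$ (a standard fact about principal units via the filtration $1+(\pi')^m\cO_{K'}$), and then computes each norm in closed form, splitting on whether $p\mid m$. When $p\nmid m$ the conjugates of $\pi^{m/p}$ are $\zeta_p^{im}\pi^{m/p}$ and the norm collapses to $1-[a^p]\pi^m$; when $p\mid m$ the element lies in $K$ and the norm is a $p$-th power, bounded using the binomial expansion. Your approach instead works with an arbitrary $x\in\pi^{k/p}\cO_{K'}$, bounds the power sums $p_j=\Tr_{K'/K}(x^j)$ via the explicit $\cO_K$-basis $1,\pi',\dots,(\pi')^{p-1}$ of $\cO_{K'}$ together with the observation that $\Tr((\pi')^s)$ vanishes unless $p\mid s$, and then propagates these bounds to the elementary symmetric functions $e_j$ by Newton's identities (using $v(j)=0$ for $1\leq j\leq p-1$), handling $j=p$ separately by recognizing $e_p=\Nm_{K'/K}(x)$. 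Both arguments are elementary and correct. The paper's route buys precise closed forms for the elementary factors but requires invoking the convergent-product decomposition and the continuity/multiplicativity of the norm; your route avoids any decomposition and works uniformly in $x$, at the mild cost of routing through Newton's identities. Your side remarks are apt: the $j=p$ case genuinely needs the direct identification $e_p=\Nm(x)$ since Newton's identity at $j=p$ produces a factor $p$ on the left, and no assumption $\mu_p\subset K$ is needed in either argument since all conjugates are taken in $\bar K$ (the paper flags the same point, noting $K'/K$ need not be Galois for $p>2$).
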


\begin{proof} This is completely standard, but we were not able to find a reference (note that for $p>2$ the extension $K'/K$ does not have to be Galois). We give a proof for completeness. 

It suffices to prove that for $m\geq k$ and for $a\in\cO_K/\pi$ we have
$$\Nm_{K'/K}(1-[a]\pi^{\frac{m}{p}})\in 1+\pi^n\cO_K.$$
 If $m$ is not divisible by $p,$ then we have $$\Nm_{K'/K}(1-[a]\pi^{\frac{m}{p}})=1-[a^p]\pi^m\in 1+\pi^n\cO_K,$$
 since $m\geq n.$
If $m$ is divisible by $p,$ then we have
$$\Nm_{K'/K}(1-[a]\pi^{\frac{m}{p}})=(1-[a]\pi^{\frac{m}{p}})^p\in 1+p\pi^{\frac{m}{p}}\cO_K+\pi^m\cO_K\subset 1+\pi^n\cO_K,$$
since $m\geq n$ and $$v(p)+v(\pi^{\frac{m}{p}})\geq (n-\frac{k}{p})v(\pi)+\frac{m}{p}v(\pi)\geq v(\pi^n).$$ This proves the lemma.
\end{proof}

\section{Lubin-Tate theory}
\label{sec:LT_theory}

Let $K$ be a local field with the ring of integers $\cO_K,$ and choose a uniformizer $\pi\in K.$ We identify $\cO_K/(\pi)\cong\FF_q.$ Choose a separable algebraic closure $K^{\sep}$ of $K,$ and denote by $K^{\ab}\subset K^{\sep}$ the maximal abelian extension of $K.$ Denote by $K_{\pi}\subset K^{\ab}$ the subfield fixed by $\theta_K(\pi).$ Then $K_{\pi}$ is a maximal totally ramified abelian extension of $K,$ and moreover the assignment $\pi\mapsto K_{\pi}$ defines a bijection
\[
\xymatrix@1{
\left\{
\text{
\begin{minipage}[c]{1.15in}
uniformizers of $K$
\end{minipage}
}
\right\}
\ar[r]^-{\sim}
&
\left\{
\text{
\begin{minipage}[c]{1.8in}
maximal totally ramified abelian extensions of $K$
\end{minipage}
}
\right\}
}
\]

The composition $$\bbar{\theta}_{K,\pi}:\cO_K^*\to K^*\xto{\theta_K}W_K^{\ab}\to\Gal(K_{\pi}/K)$$
is an isomorphism.

The Lubin-Tate theory \cite{LT65} provides a geometric (more precisely, formal scheme-theoretic) interpretation of the field $K_{\pi}$ and of the isomorphism $\bbar{\theta}_{K,\pi}:\cO_K^*\xto{\sim} \Gal(K_{\pi}/K).$ We will very briefly recall the main statements, and we refer to \cite{LT65, CF67, Haz78} for details. Then we will deduce an application (Lemma \ref{lem:LT_for_approximating_can_K}) to the uniformizers $\can_K(\pi)$ introduced in Theorem \ref{th:main}. 

The main ingredient is the unique (up to a non-canonical isomorphism) $1$-dimensional formal $\cO_K$-module over $\cO_K$ such that the action of $\pi$ induces the relative Frobenius endomorphism modulo $\pi.$ More precisely, there exists a pair $(F,\alpha),$ where $F(X,Y)\in\cO_K[[X,Y]]$ is a formal group law over $\cO_K$ and $$\alpha:\cO_K\to\End(\Spf\cO_K[[X]],F),\quad \alpha(r)=[r]_F,$$ is a homomorphism of rings satisfying
$$[r]_F(X)\equiv rX\text{ mod }X^2\text{ for }r\in\cO_K,\quad [\pi]_F(X)\equiv X^q\text{ mod }\pi.$$
Such $F$ is called a Lubin-Tate formal group law, and the formal $\cO_K$-module $(F,\alpha)$ is called a Lubin-Tate formal $\cO_K$-module.

Such a pair $(F,\alpha)$ is unique up to an isomorphism, i.e. up to a formal change of the coordinate. Moreover, the pair $(F,\alpha)$ is uniquely determined  by the power series $f(X)=[\pi]_F(X),$ which must satisfy the condition
\begin{equation}\label{eq:LT_action_of_pi}
f(X)\equiv \pi X+X^q\text{ mod }\pi X^2.
\end{equation}
For any $f$ satisfying \eqref{eq:LT_action_of_pi}, the Lubin-Tate formal module $(F,\alpha)$ such that $[\pi]_F=f$ is described as follows:

\begin{itemize}
\item the power series $F(X,Y)$ is the unique power series such that
$$F(X,Y)\equiv X+Y\text{ mod }(X,Y)^2,\quad F(f(X),f(Y))=f(F(X),F(Y));$$

\item for $r\in\cO_K$ the power series $[r]_F(X)$ is the unique power series such that
$$[r]_F(X)\equiv rX\text{ mod }X^2,\quad [r]_F(f(X))=f([r]_F(X)).$$
\end{itemize}

Choose a Lubin-Tate formal $\cO_K$-module $(F,\alpha),$ and consider its $\m_{K^{\sep}}$-valued points. We obtain a non-standard structure of an abelian group on the set $\m_{K^{\sep}},$ and the structure of an $\cO_K$-module on this abelian group:
$$x\underset{F}{+}y=F(x,y),\quad r\underset{F}{\cdot}x=[r]_F(x),\quad x,y\in\m_{K^{\sep}},\,r\in\cO_K.$$

Denote by $\m_{K^{\sep}}[\pi^{\infty}]$ the $\cO_K$-module of $\pi$-power torsion elements of $\m_{K^{\sep}}$ (with respect to the above $\cO_K$-module structure). Then we have a non-canonical $\cO_K$-linear isomorphism $\m_{K^{\sep}}[\pi^{\infty}]\cong K/\cO_K.$ It follows that we have a canonical isomorphism of rings
$$\cO_K\cong\End_{\cO_K}(\m_{K^{\sep}}[\pi^{\infty}]).$$

\begin{theo}\label{th:LT_main}\cite{LT65} 1) The extension of $K$ generated by the subset $\m_{K^{\sep}}[\pi^{\infty}]\subset K^{\sep}$ is exactly the field $K_{\pi}.$

2) The canonical map $\psi_{K,\pi}:\Gal(K_{\pi}/K)\to \Aut_{\cO_K}(\m_{K^{\sep}}[\pi^{\infty}])\cong \cO_K^*$ is an isomorphism.

3) The composition $\psi_{K,\pi}\circ\bbar{\theta}_{K,\pi}:\cO_K^*\to\cO_K^*$ sends $u$ to $u^{-1}.$ 
\end{theo}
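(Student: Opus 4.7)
The plan is to prove the three parts in the standard way, in the order \emph{(analysis of torsion)} $\to$ \emph{(Galois action)} $\to$ \emph{(identification with $K_\pi$ and reciprocity normalization)}. Most of the work is purely formal once the Eisenstein structure of $[\pi]_F$ is understood; the only subtle point is the sign in part 3).

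First, I would study the tower $K_{\pi,n}:=K(\m_{K^{\sep}}[\pi^n])$. Since $f(X):=[\pi]_F(X)$ satisfies $f(X)\equiv \pi X \bmod X^2$ and $f(X)\equiv X^q \bmod \pi$, the polynomial $f(X)/X$ is Eisenstein of degree $q-1$. Hence any nonzero $\lambda_1\in\ker[\pi]_F$ is a uniformizer of the totally ramified degree $q-1$ extension $K_{\pi,1}/K$. Iterating, if $\lambda_n\in\m_{K^{\sep}}[\pi^n]$ is a primitive $\pi^n$-torsion element (so $[\pi]_F(\lambda_n)=\lambda_{n-1}$ is a primitive $\pi^{n-1}$-torsion element), then $\lambda_n$ satisfies the Eisenstein equation $f(X)-\lambda_{n-1}=0$ over $\cO_{K_{\pi,n-1}}$. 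By induction, $K_{\pi,n}/K$ is totally ramified of degree $q^{n-1}(q-1)=|(\cO_K/\pi^n)^*|$, and $\lambda_n$ is a uniformizer of $K_{\pi,n}$.

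Second, I would set up the Galois action. The power series $[r]_F(X)$ has coefficients in $\cO_K$, so the $\cO_K$-module structure on $\m_{K^{\sep}}[\pi^\infty]$ is Galois-equivariant. This gives an injective homomorphism
\[
\psi_n:\Gal(K_{\pi,n}/K)\hookrightarrow\Aut_{\cO_K}(\m_{K^{\sep}}[\pi^n])\cong (\cO_K/\pi^n)^*,
\]
injective because $K_{\pi,n}$ is generated by any primitive $\pi^n$-torsion element. The two sides have the same order, so $\psi_n$ is an isomorphism. Taking limits, and writing $L_\pi=\bigcup_n K_{\pi,n}$, this gives $\Gal(L_\pi/K)\cong\cO_K^*$. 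In particular $L_\pi/K$ is abelian and totally ramified, so $L_\pi\subset K_\pi$. Once $L_\pi=K_\pi$ is established, parts 1) and 2) follow.

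Third, I would identify $L_\pi$ with $K_\pi$ and simultaneously pin down the sign in part 3) by computing norms. Using the Eisenstein polynomial $f(X)/X=X^{q-1}+\dots+\pi$ for $\lambda_1$, one finds $\Nm_{K_{\pi,1}/K}(-\lambda_1)=\pi$, and inductively (using $f(X)-\lambda_{n-1}$) one gets $\Nm_{K_{\pi,n}/K}(-\lambda_n)=\pi$. Hence $\pi$ lies in every norm subgroup $\Nm_{K_{\pi,n}/K}(K_{\pi,n}^*)$, so by the local existence theorem $\theta_K(\pi)$ acts trivially on $L_\pi$, i.e.\ $L_\pi\subset K_\pi$. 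Since both $L_\pi$ and $K_\pi$ are maximal totally ramified abelian extensions fitting into the short exact sequence $1\to\Gal(L_\pi/K)\to W_K^{\ab}\xto{\deg}\Z\to 0$ (with $\cO_K^*\to\Gal(L_\pi/K)$ surjective), a rank/degree count forces $L_\pi=K_\pi$. It remains to verify $\psi_{K,\pi}\circ\bar\theta_{K,\pi}(u)=u^{-1}$. For this I would apply \eqref{eq:comm_square_norm} to the sub-extensions $K_{\pi,n}/K$: a coset representative for $u\in(\cO_K/\pi^n)^*$ is $\theta_{K_{\pi,n}}(\lambda_n^{\psi_{K,\pi}^{-1}(u)})$ via the transfer, and its image under $\Nm_{K_{\pi,n}/K}$ is $\pi\cdot u^{-1}\cdot(\text{unit in }1+\pi^n\cO_K)$, once the norm of $-\lambda_n$ is accounted for.

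The main obstacle is precisely this last computation: extracting the inversion $u\mapsto u^{-1}$ rather than $u\mapsto u$ depends delicately on the conventions for $\theta_K$ (geometric versus arithmetic Frobenius) and on the sign appearing in $\Nm(-\lambda_n)=\pi$. I would handle this by reducing to the test case of an unramified twist: for $u\in\cO_K^*$, comparing the Lubin-Tate modules attached to $\pi$ and to $u\pi$ over the completion of the maximal unramified extension $\widehat{K^{\mathrm{nr}}}$ (where they become isomorphic by a unique power series $\theta(X)\equiv cX\bmod X^2$ with $c\in\cO_{\widehat{K^{\mathrm{nr}}}}^*$ satisfying $\varphi(c)/c=u$), and tracking how this isomorphism transports the torsion points, yields the inversion. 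This last step is the classical computation in \cite{LT65} and \cite[Chapter VI]{CF67}, and it fixes the sign unambiguously.
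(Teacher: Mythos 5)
The paper itself offers no proof of this statement: Theorem \ref{th:LT_main} is quoted from \cite{LT65}, with \cite{CF67,Haz78} as further references, so there is no internal argument to measure yours against. Your sketch is exactly the standard proof from those references --- the Eisenstein analysis of the torsion tower $K_{\pi,n}=K(\lambda_n)$, the order count identifying $\Gal(K_{\pi,n}/K)$ with $(\cO_K/\pi^n)^*$, the norm computation $\Nm_{K_{\pi,n}/K}(-\lambda_n)=\pi$ showing $\theta_K(\pi)$ fixes $L_\pi=\bigcup_n K_{\pi,n}$, and the comparison of the Lubin--Tate modules attached to $\pi$ and $u\pi$ over $\widehat{K^{\mathrm{nr}}}$ to pin down the action of units --- and in outline it is correct; like the paper, you ultimately lean on the classical computation for the decisive step.

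Two steps as written do need repair, though. First, the ``rank/degree count'' forcing $L_\pi=K_\pi$ is not airtight: at that point you only have a surjection $\cO_K^*\cong\Gal(K_\pi/K)\onto\Gal(L_\pi/K)\cong\cO_K^*$, and in equal characteristic (which the paper genuinely uses in Section \ref{ssec:equal_char}) $\cO_K^*\cong\mu_{q-1}\times\Z_p^{\bbN}$ is not topologically finitely generated, so it does admit proper quotients isomorphic to itself; knowing merely that $\pi$ is a norm from each $K_{\pi,n}$ gives $L_\pi\subseteq K_\pi$ but not equality. The standard fixes are either to compute the full norm groups $\Nm_{K_{\pi,n}/K}(K_{\pi,n}^*)=\langle\pi\rangle\times(1+\pi^n\cO_K)$ and use the existence theorem to get $K^{\ab}=L_\pi\cdot K^{\mathrm{nr}}$, or to observe that once part 3) is proved the restriction $\Gal(K_\pi/K)\to\Gal(L_\pi/K)$ is injective, which yields $L_\pi=K_\pi$ at no extra cost. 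Second, the intermediate sentence extracting $u\mapsto u^{-1}$ from the transfer and the norm of a Galois conjugate of $\lambda_n$ cannot work as stated: conjugate elements have equal norms, so no factor $u^{-1}$ can appear that way. The argument you describe in your final paragraph --- the unique isomorphism over $\widehat{K^{\mathrm{nr}}}$ between the modules for $\pi$ and $u\pi$, with $\varphi(c)/c=u$ --- is the actual classical proof, and it also settles the normalization: with the paper's convention that $\theta_K$ sends uniformizers to degree-one (arithmetic Frobenius) elements, the composite is indeed $u\mapsto u^{-1}$.
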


We choose the Lubin-Tate formal module $(F,\alpha)$ for which we have $f(X)=[\pi]_F(X)=X^q+\pi X.$ Denote by $f^{[n]}(X)$ the $n$-th iteration of $f,$ where $f^{[0]}(X)=X.$ Then for $n>0$ the primitive $\pi^n$-torsion elements of $\m_{K^{\sep}}[\pi^{\infty}]$ are exactly the roots of the (separable) Eisenstein polynomial \begin{equation}\label{eq:formula_for_g_n} g_n(X)=\frac{f^{[n]}(X)}{f^{[n-1]}(X)}=(f^{[n-1]}(X))^{q-1}+\pi\end{equation} over $K.$ These roots have valuation $\frac{v(\pi)}{q^n-q^{n-1}}.$ The following statement is immediate from Theorem \ref{th:LT_main}. 

\begin{cor}\label{cor:LT_for_comparing_uniformizers} Let $\pi$ and $\varpi$ be uniformizers of a local field $K$ with the residue field $\FF_q,$ let $n$ be a positive integer, and consider the polynomial $g_n(X)$ defined in \eqref{eq:formula_for_g_n} (corresponding to the uniformizer $\pi$). The following are equivalent.

\begin{enumerate}[label=(\roman*),ref=(\roman*)]
\item We have $\varpi\equiv \pi\text{ mod }\pi^{n+1}.$

\item There exists a root $y$ of $g_n$ such that $\theta_K(\varpi)(y)=y.$
\end{enumerate}\end{cor}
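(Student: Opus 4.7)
The plan is to translate condition (ii) into a module-theoretic condition on a primitive $\pi^n$-torsion point of the chosen Lubin--Tate formal $\cO_K$-module $(F,\alpha)$, using Theorem \ref{th:LT_main}, and to read off (i) directly from there.

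First, I would write $\varpi=u\pi$ for the unique $u\in\cO_K^*$, so that $\theta_K(\varpi)=\theta_K(u)\theta_K(\pi)$. Since $\theta_K(\pi)$ acts as the identity on $K_{\pi}$ by the very definition of $K_{\pi}$, the restriction of $\theta_K(\varpi)$ to $K_{\pi}$ equals $\bbar{\theta}_{K,\pi}(u)\in\Gal(K_{\pi}/K)$. All roots of $g_n$ are primitive $\pi^n$-torsion elements of $\m_{K^{\sep}}[\pi^{\infty}]$, so by Theorem \ref{th:LT_main}(1) they lie in $K_{\pi}$, and condition (ii) is equivalent to the existence of such a root $y$ fixed by $\bbar{\theta}_{K,\pi}(u)$. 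By Theorem \ref{th:LT_main}(3), $\psi_{K,\pi}(\bbar{\theta}_{K,\pi}(u))=u^{-1}$, so this in turn is equivalent to $[u^{-1}]_F(y)=y$.

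Next, the $\cO_K$-module $\m_{K^{\sep}}[\pi^n]$ is free cyclic of rank one over $\cO_K/\pi^n$, and its generators are exactly the primitive $\pi^n$-torsion elements, that is, the roots of $g_n$. Therefore the stabilizer in $\cO_K^*$ of any root $y$ of $g_n$ (under the $F$-action) is exactly the kernel of the reduction $\cO_K^*\to(\cO_K/\pi^n)^*$, namely $1+\pi^n\cO_K$. In particular the existential quantifier in (ii) is harmless: one root is fixed if and only if all are. Combining, condition (ii) is equivalent to $u^{-1}\in 1+\pi^n\cO_K$, hence to $u\equiv 1\pmod{\pi^n}$, hence to $\varpi=u\pi\equiv\pi\pmod{\pi^{n+1}}$, which is (i).

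There is no real obstacle here: the statement is essentially a careful bookkeeping of the three ingredients of Theorem \ref{th:LT_main}, of the defining property of $K_{\pi}$ (that $\theta_K(\pi)$ fixes it), and of the description of the $\cO_K$-module structure on $\pi^n$-torsion. The only mild subtlety is keeping track of the inversion in Theorem \ref{th:LT_main}(3), so that one correctly sees $[u^{-1}]_F$ rather than $[u]_F$ acting on $y$.
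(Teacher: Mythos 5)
Your argument is correct and is exactly the bookkeeping the paper has in mind: the paper simply declares the corollary ``immediate from Theorem \ref{th:LT_main}'', and your unpacking (restrict $\theta_K(\varpi)=\theta_K(u)\theta_K(\pi)$ to $K_\pi$, use parts (1)--(3) of the theorem, and identify the stabilizer of a generator of the cyclic $\cO_K/\pi^n$-module of $\pi^n$-torsion with $1+\pi^n\cO_K$) is the intended verification, including the harmless inversion from part (3).
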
 

We now use this to obtain a useful tool for approximating the uniformizer $\can_K(\pi),$ introduced in Theorem \ref{th:main}. Since we want this uniformizer to be $-\pi,$ we replace $X^q+\pi X$ by $X^q-\pi X.$ 

\begin{lemma}\label{lem:LT_for_approximating_can_K} Let $\pi$ be a uniformizer in a local field $K,$ and we identify $\cO_K/\pi\cong\FF_q.$ Let $n$ be a positive integer. Denote by $f^{[n]}$ the $n$-th iteration of the polynomial $f(X)=X^q-\pi X,$ and put $g_n(X)=(f^{[n-1]}(X))^{q-1}-\pi.$   
Then the following are equivalent.
\begin{enumerate}[label=(\roman*),ref=(\roman*)]
\item We have $\can_K(\pi)\equiv -\pi\text{ mod }\pi^{n+1}.$ \label{can_K_close}

\item There exists a root of $g_n$ in $HW_{(K,\pi)}(\FF_q).$ \label{root}

\item There exists an element $y\in HW_{(K,\pi)}(\FF_q)$ such that $v(g_n(y))>n v(\pi).$ \label{almost_root} 
\end{enumerate}
\end{lemma}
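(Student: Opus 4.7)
The implication \ref{root} $\Rightarrow$ \ref{almost_root} is immediate. For \ref{can_K_close} $\Leftrightarrow$ \ref{root}, I apply Corollary \ref{cor:LT_for_comparing_uniformizers} with $-\pi$ in place of the uniformizer $\pi$ there, and with $\varpi := \can_K(\pi)$: the polynomial $f(X) = X^q - \pi X = X^q + (-\pi)X$ is precisely the Lubin--Tate series $[-\pi]_F(X)$ for the formal $\cO_K$-module $F$ attached to the uniformizer $-\pi$, so the Eisenstein polynomial in the corollary coincides with our $g_n$. By construction of $\can_K(\pi)$, the element $\theta_K(\can_K(\pi))$ acts on $K^{\ab} \subset HW_{(K,\pi)}(\bbar{\FF}_q)$ exactly as $\varphi_{\pi}$; and from the formula $\varphi_{\pi}(\sum [a_i]\pi^i) = \sum [a_i^q]\pi^i$ one reads off that the $\varphi_{\pi}$-fixed subfield of $HW_{(K,\pi)}(\bbar{\FF}_q)$ is $HW_{(K,\pi)}(\FF_q)$. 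Since the roots of $g_n$ lie in the Lubin--Tate extension inside $K^{\ab}$, the corollary's criterion is equivalent to the existence of a root of $g_n$ in $HW_{(K,\pi)}(\FF_q)$.

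For \ref{almost_root} $\Rightarrow$ \ref{root}, I will apply Krasner's lemma. Let $y \in HW_{(K,\pi)}(\FF_q)$ with $v(g_n(y)) > n v(\pi)$, and choose a root $\alpha_0$ of $g_n$ maximizing $m := v(y - \alpha_0)$. The $d := q^{n-1}(q-1)$ roots of $g_n$ are exactly the primitive $\pi^n$-torsion points of $F$; writing each $\alpha = \alpha_0 \underset{F}{+} \gamma$ with $\gamma \in F[\pi^n]$, and using $F(X,Y) = X + Y + \text{higher-order terms}$ together with $v(\alpha_0) > 0$, one gets $v(\alpha - \alpha_0) = v(\gamma)$. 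Enumerating $\gamma$ by its order (and, for $\gamma$ of maximal order $\pi^n$, excluding the $q^{n-1}$ ``bad'' values for which $\alpha_0 \underset{F}{+} \gamma$ drops to $\pi^{n-1}$-torsion) yields the explicit identities
\[
M := \max_{\alpha \ne \alpha_0} v(\alpha - \alpha_0) = \frac{v(\pi)}{q-1}, \qquad v(g_n'(\alpha_0)) = \sum_{\alpha \ne \alpha_0} v(\alpha - \alpha_0) = n v(\pi) - \frac{v(\pi)}{q-1},
\]
and hence the key identity $v(g_n'(\alpha_0)) + M = n v(\pi)$.

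With this in hand, set $S := \{\alpha : v(\alpha - \alpha_0) \ge m\}$ (so $\alpha_0 \in S$). The strong triangle inequality combined with the maximality of $m$ forces $v(y - \alpha) = m$ for $\alpha \in S$ and $v(y - \alpha) = v(\alpha - \alpha_0)$ for $\alpha \notin S$. Summing and using $\sum_{\alpha \in S \setminus \{\alpha_0\}} v(\alpha - \alpha_0) \ge (|S|-1) m$ collapses everything to $v(g_n(y)) \le m + v(g_n'(\alpha_0))$. Combined with $v(g_n(y)) > n v(\pi)$ and the key identity, this gives $m > n v(\pi) - v(g_n'(\alpha_0)) = M$, which is precisely the Krasner condition. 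Since $g_n$ is Eisenstein, hence irreducible, Krasner's lemma yields $K(\alpha_0) \subset K(y) \subset HW_{(K,\pi)}(\FF_q)$, so $\alpha_0$ is the root sought in \ref{root}. The main obstacle is the Lubin--Tate bookkeeping behind the two identities for $M$ and $v(g_n'(\alpha_0))$, but both reduce to a direct count of elements in $F[\pi^n]$.
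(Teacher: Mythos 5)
Your proposal is correct, and its first two implications (\Iff{can_K_close}{root} via Corollary \ref{cor:LT_for_comparing_uniformizers} applied to the uniformizer $-\pi$, and the tautological \Implies{root}{almost_root}) coincide with the paper's. The interesting divergence is in \Implies{almost_root}{root}: the paper also uses the distance structure of the roots (distance $\frac{v(\pi)}{q^m-q^{m-1}}$ when the formal difference is primitive $\pi^m$-torsion) and Krasner's lemma, but it then runs an iterative descent through the torsion filtration, successively isolating the cluster of $q^{n-1}, q^{n-2},\dots$ roots nearest to $y$ and propagating the inequality $\sum v(y-\beta_i)>n$ down the levels until a single root $\beta_1$ with $v(y-\beta_1)>\frac{v(\pi)}{q-1}$ remains. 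You instead prove the single global estimate $v(g_n(y))\leq \max_{\alpha} v(y-\alpha)+v(g_n'(\alpha_0))$ together with the exact computation $v(g_n'(\alpha_0))=n\,v(\pi)-\frac{v(\pi)}{q-1}$ (the different of the Lubin--Tate level-$n$ extension), so that $v(g_n(y))>n\,v(\pi)$ immediately forces $v(y-\alpha_0)>\frac{v(\pi)}{q-1}=\max_{\alpha\neq\alpha_0}v(\alpha-\alpha_0)$, which is exactly the Krasner hypothesis. This is a genuinely different bookkeeping: the paper's argument avoids computing the different but pays with an induction; yours is shorter and makes the numerology ($n\,v(\pi)=M+v(g_n'(\alpha_0))$) transparent, at the cost of the torsion count behind the two identities (which you verify correctly, including the exclusion of the $q^{n-1}$ translates $-\alpha_0+F[\pi^{n-1}]$).

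Two small points of hygiene. First, the final invocation of Krasner should be made over the field $HW_{(K,\pi)}(\FF_q)$ itself (which is maximally complete, hence complete and henselian), exactly as the paper does: since $g_n\in K[X]$, the conjugates of $\alpha_0$ over $HW_{(K,\pi)}(\FF_q)$ are among the roots of $g_n$, so $m>M$ gives $\alpha_0\in HW_{(K,\pi)}(\FF_q)$ directly; the phrase ``$K(\alpha_0)\subset K(y)$'' is not the right formulation ($K(y)$ need not be henselian, and $y$ is typically transcendental over $K$), and irreducibility of $g_n$ is not needed. Second, in the degenerate case $q=2,$ $n=1$ your $M$ is a maximum over the empty set and the key identity degenerates; there $g_1(X)=X-\pi$ has its root in $K,$ so \ref{root} holds trivially and nothing is lost, but a parenthetical remark would tidy this up.
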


\begin{proof}
The equivalence \Iff{can_K_close}{root} is a direct application of Corollary \ref{cor:LT_for_comparing_uniformizers} to the uniformizers $-\pi$ and $\can_K(\pi):$ a root $y$ of $g_n$ is contained in $HW_{(K,\pi)}(\FF_q)$ if and only if $$y=\varphi(y)=\theta_K(\can_K(\pi))(y).$$ Tautologically, we have \Implies{root}{almost_root}.

It remains to prove \Implies{almost_root}{root}. We normalize the valuation on $HW_{(K,\pi)}(\bbar{\FF}_q)$ so that $v(\pi)=1.$ Let $y$ be as in \ref{almost_root}, and denote by $\beta_1,\dots,\beta_{q^n-q^{n-1}}$ the roots of $g_n(X)$ in $HW_{(K,\pi)}(\bbar{\FF}_q).$ By Krasner's lemma \cite{O17, Kra46}, it suffices to prove that there exists $i\in\{1,\dots,q^n-q^{n-1}\}$ such that $v(y-\beta_i)>v(y-\beta_j)$ for $j\neq i.$ We will use the following obvious statement on the valuations of the differences of the roots of $g_n.$ If $i\ne j,$ then $v(\beta_i-\beta_j)=\frac{1}{q^m-q^{m-1}}$ if the difference of $\beta_i$ and $\beta_j$ in $\m_{K^{\sep}}[\pi^{\infty}]$ is a primitive $\pi^m$-torsion element.  

Now the assumption $v(g_n(y))>n$ means that 
$$\sum\limits_{i=1}^{q^n-q^{n-1}}v(y-\beta_i)>n\geq 1.$$ We may and will assume that $v(y-\beta_1)>\frac{1}{q^n-q^{n-1}}.$ There are exactly $q^{n-1}$ roots $\beta_i$ such that $v(\beta_1-\beta_i)>\frac{1}{q^n-q^{n-1}}.$ We will assume that these are $\beta_1,\dots,\beta_{q^{n-1}}.$ If $n=1,$ then we are done. Assuming $n\geq 2,$ we see that
$$\sum\limits_{i=1}^{q^{n-1}}v(y-\beta_i)>n-\sum\limits_{i=q^{n-1}+1}^{q^n-q^{n-1}}v(y-\beta_i)=n-\frac{q^n-2q^{n-1}}{q^n-q^{n-1}}=n-1+\frac{1}{q-1}\geq \frac{q}{q-1}.$$ 
We may and will assume that $v(y-\beta_1)>\frac{1}{q^{n-1}}\cdot\frac{q}{q-1}=\frac{1}{q^{n-1}-q^{n-2}}.$ There are exactly $q^{n-2}$ roots $\beta_i$ such that $v(\beta_1-\beta_i)>\frac{1}{q^{n-1}-q^{n-2}},$ and we will assume that these are $\beta_1,\dots,\beta_{q^{n-2}}.$ If $n=2,$ then we are done. Assuming $n\geq 3,$ we obtain
\begin{multline*}\sum\limits_{i=1}^{q^{n-2}}v(y-\beta_i)>n-1+\frac{1}{q-1}-\sum\limits_{i=q^{n-2}+1}^{q^{n-1}}v(y-\beta_i)=n-1+\frac{1}{q-1}-\frac{q^{n-1}-q^{n-2}}{q^{n-1}-q^{n-2}}\\
=n-2+\frac{1}{q-1}\geq \frac{q}{q-1}.\end{multline*}
Continuing the process, we eventually find a root, say $\beta_1,$ such that $v(y-\beta_1)>\frac{1}{q-1}$ and $v(y-\beta_i)\leq \frac{1}{q-1}$ for $i\ne 1.$ By Krasner's lemma, we have $\beta_1\in HW_{(K,\pi)}(\FF_q),$ which proves the implication \Implies{almost_root}{root}.
\end{proof}

\section{Proof of Theorem \ref{th:main}}
\label{sec:proof_of_main}

 Let $K$ be a local field, and let $\pi\in K$ be a uniformizer, and we identify $\cO_K/\pi\cong \FF_q.$ As in Lemma \ref{lem:LT_for_approximating_can_K}, consider the polynomial $f(X)=X^q-\pi X,$ denote by $f^{[n]}$ its iterations, and put $$g_n(X)=\frac{f^{[n]}(X)}{f^{[n-1]}(X)} =(f^{[n-1]}(X))^{q-1}-\pi.$$

The general idea of the proof of Theorem \ref{th:main} is the following. We will first apply Lemma \ref{lem:LT_for_approximating_can_K}, \Implies{root}{can_K_close}, to prove Theorem \ref{th:main} in the equal characteristic case. Then we will apply Lemma \ref{lem:LT_for_approximating_can_K}, \Implies{almost_root}{can_K_close}, to prove that $\can_K(\pi)$ and $-\pi$ are ``very close'' when $K$ is sufficiently ramified over $\Q_p$ (the precise statement is Lemma \ref{lem:main_th_for_deeply_ramified}). Then we will use Proposition \ref{prop:can_related_via_Norm} and Lemma \ref{lem:norms_for_wildly_ramified} to prove Theorem \ref{th:main} in the mixed characteristic case.

\subsection{The equal characteristic case.}
\label{ssec:equal_char}

Suppose that $K$ has characteristic $p,$ i.e. $K\cong \FF_q((\pi)).$ In this case we have $HW_{(K,\pi)}(\FF_q)\cong \FF_q((\pi^{\Q})).$ 
We directly write down a root of $g_n(X)$ in $\FF_q((\pi^{\Q}))$ for each $n>0.$ We put
\begin{equation}\label{eq:formula_for_y_n} y_n:=\sum\limits_{0<k_1<\dots<k_{n-1}}\pi^{\frac{1}{q-1}-q^{-k_1}-\dots-q^{-k_{n-1}}},\end{equation}
where the summation is over all $(n-1)$-tuples of strictly increasing positive integers (for $n=1$ we have $y_1=\pi^{\frac{1}{q-1}}$). Note that the lowest degree term of $y_n$ is given by $$\pi^{\frac{1}{q-1}-q^{-1}-\dots-q^{-n+1}}=\pi^{\frac{1}{q^n-q^{n-1}}}.$$ We have $f(y_1)=0,$ and for $n>1$ we have
\begin{multline*}f(y_n)= \sum\limits_{0<k_1<\dots<k_{n-1}} \pi^{\frac{q}{q-1}-q^{1-k_1}-\dots-q^{1-k_{n-1}}} - \sum\limits_{0<k_1<\dots<k_{n-1}}\pi^{\frac{q}{q-1}-q^{-k_1}-\dots-q^{-k_{n-1}}} =\\
 \sum\limits_{0\leq k_1<\dots<k_{n-1}} \pi^{\frac{q}{q-1}-q^{-k_1}-\dots-q^{-k_{n-1}}} - \sum\limits_{0<k_1<\dots<k_{n-1}}\pi^{\frac{q}{q-1}-q^{-k_1}-\dots-q^{-k_{n-1}}}=\\
 \sum\limits_{0 = k_1<\dots<k_{n-1}} \pi^{\frac{1}{q-1}-q^{-k_2}-\dots-q^{-k_{n-1}}} = y_{n-1}.
 \end{multline*} 

It follows that $y_n$ is a root of $f^{[n]}(X),$ but not a root of $f^{[n-1]}(X).$ Hence, $y_n$ is a root of $g_n(X).$ By Lemma \ref{lem:LT_for_approximating_can_K} we conclude that $\can_K(\pi)\equiv -\pi\text{ mod }\pi^{n+1}$ for all $n>0,$ hence $\can_K(\pi)=-\pi.$ This proves Theorem \ref{th:main} in the equal characteristic case.

\subsection{The mixed characteristic case}
\label{ssec:mixed_char}

Now we consider the mixed characteristic case, i.e. suppose that $K$ is a finite extension of $\Q_p.$ We normalize all the valuations so that $v(p)=1.$ Given the integers $n>0,$ $k\geq 0,$ consider the following statement:
$$S(n,k):\quad \text{if }v(\pi)\leq \frac{1}{p^k} \text{ (i.e. }e(K/\Q_p)\geq p^k\text{), then }\can_K(\pi)\equiv -\pi\text{ mod }\pi^{n+1}.$$
We first prove it for $n\leq p^k.$

\begin{lemma}\label{lem:main_th_for_deeply_ramified} For $m=e(K/\Q_p),$ we have $\can_K(\pi)\equiv -\pi\text{ mod }\pi^{m+1}.$ In particular, the statement $S(n,k)$ holds for $n\leq p^k.$ 
\end{lemma}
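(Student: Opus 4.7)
The plan is to verify hypothesis \ref{almost_root} of Lemma \ref{lem:LT_for_approximating_can_K} for $n = m$ and then conclude by \Implies{almost_root}{can_K_close}; the ``in particular'' clause is then immediate, since $v(\pi)\leq 1/p^k$ forces $m\geq p^k\geq n$, so a congruence modulo $\pi^{m+1}$ implies the congruence modulo $\pi^{n+1}$ for any $n\leq p^k$. Normalize the valuation on $HW_{(K,\pi)}(\bbar{\FF}_q)$ so that $v(\pi) = 1$ (hence $v(p) = m$), and take $\tilde{y}_m := \sum_T \pi^{\alpha_T}$, the same formal Hahn series as in Section \ref{ssec:equal_char}, now interpreted as an element of $HW_{(K,\pi)}(\FF_q)$. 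The goal is to show $v(g_m(\tilde{y}_m)) > m$; the case $m = 1$ is trivial, since $g_1(\tilde{y}_1) = \tilde{y}_1^{q-1} - \pi = 0$.

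The key input is that in characteristic zero the non-additivity of $X\mapsto X^q$ is controlled by $p$. Expanding $\tilde{y}_n^q$ multinomially, the ``diagonal'' contribution is $\sum_T \pi^{q\alpha_T}$, while every non-diagonal multinomial coefficient is divisible by $p$ (since $q = p^r$). The exponent identity of Section \ref{ssec:equal_char} is valid over any base ring and still gives $\sum_T \pi^{q\alpha_T} - \pi\sum_T \pi^{\alpha_T} = \tilde{y}_{n-1}$, so
\[
f(\tilde{y}_n) = \tilde{y}_{n-1} + \eta_n, \qquad v(\eta_n) \geq m + \tfrac{q-1}{q^n},
\]
where the extra $\tfrac{q-1}{q^n}$ comes from the fact that the smallest off-diagonal monomial in $\tilde{y}_n^q$ has exponent $(q-1)\alpha_{T_1} + \alpha_{T_2}$, with $T_1,T_2$ the two tuples yielding the smallest exponents of $\tilde{y}_n$ and $\alpha_{T_2} - \alpha_{T_1} = \tfrac{q-1}{q^n}$ by a direct calculation.

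Define $z_k := f^{[k]}(\tilde{y}_m) - \tilde{y}_{m-k}$, so that $z_0 = 0$ and $z_{k+1} = [f(\tilde{y}_{m-k}+z_k) - f(\tilde{y}_{m-k})] + \eta_{m-k}$. Expanding $f(y+z) - f(y) = (qy^{q-1}-\pi)z + \sum_{j=2}^q \binom{q}{j} y^{q-j} z^j$ at $y = \tilde{y}_{m-k}$, the leading factor satisfies $v(qy^{q-1}-\pi) = 1$ (since $v(q)\geq m > 1 = v(\pi)$ for $m\geq 2$ and $v(\tilde{y}_{m-k}) > 0$), while each term with $2\leq j\leq q-1$ has valuation at least $m + jv(z_k)$ (using that $\binom{q}{j}$ is divisible by $p$), and $z_k^q$ has valuation $qv(z_k)$; once $v(z_k) > 0$, these higher-order contributions are dominated, yielding
\[
v(z_{k+1}) \geq \min\bigl(1 + v(z_k),\ m + \tfrac{q-1}{q^{m-k}}\bigr).
\]
A short induction shows $v(z_k) \geq m + \tfrac{q-1}{q^{m-k+1}}$ for $1\leq k\leq m-1$; in particular $v(z_{m-1}) \geq m + \tfrac{q-1}{q^2} > m$.

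For the final step, $\tilde{y}_1^{q-1} = \pi$ and each $\binom{q-1}{j}$ is a $p$-adic unit (by Kummer's theorem: every base-$p$ digit of $q-1$ equals $p-1$, so no carries occur in $j + (q-1-j)$), hence
\[
g_m(\tilde{y}_m) = (\tilde{y}_1 + z_{m-1})^{q-1} - \pi = \sum_{j=1}^{q-1} \binom{q-1}{j} \tilde{y}_1^{q-1-j} z_{m-1}^j,
\]
and every summand has valuation at least $v(z_{m-1}) > m$, completing the verification. The main obstacle is to set up the recursion cleanly: each $f$-iteration loses one unit of valuation in the propagated error, but the fresh additivity error $\eta_{m-k}$ has valuation at least $m + \tfrac{q-1}{q^{m-k}}$, which dominates the recursion and keeps the total error strictly above $m$ throughout all $m-1$ iterations.
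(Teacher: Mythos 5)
Your proof is correct, and it uses the same skeleton as the paper: take the element $y_m$ from the equal-characteristic computation \eqref{eq:formula_for_y_n}, view it inside $HW_{(K,\pi)}(\FF_q)$, and verify condition \ref{almost_root} of Lemma \ref{lem:LT_for_approximating_can_K} so that \Implies{almost_root}{can_K_close} gives the congruence (the ``in particular'' clause is handled identically). Where you diverge is in how the bound $v(g_m(z_m))>m\,v(\pi)$ is established. The paper does this formally: it lifts everything to $\Z((t^{\Q}))$, notes that the equal-characteristic identity says precisely that $\tilde g_m(\tilde z_m)$ is divisible by $p$, observes that all exponents occurring are $\geq\veps>0$ because the constant term of $\tilde g_m$ is $-t$ and $\tilde z_m$ has strictly positive exponents, and then specializes $t\mapsto\pi$; no estimates are needed. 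You instead re-derive the same divisibility quantitatively, isolating the non-additivity error $\eta_n$ (off-diagonal multinomial terms, each divisible by $p$, smallest exponent $(q-1)\alpha_{T_1}+\alpha_{T_2}$) and propagating it through the iterations via $z_k=f^{[k]}(\tilde y_m)-\tilde y_{m-k}$, ending with $v(z_{m-1})>m$ and hence $v(g_m(\tilde y_m))>m$. This works, and it even yields an explicit rate ($v(z_{m-1})\geq m+\tfrac{q-1}{q^2}$) that the paper's argument does not make visible; the price is the bookkeeping, which the paper's lift-to-$\Z((t^{\Q}))$ trick avoids entirely. One small imprecision: the phrase ``once $v(z_k)>0$ these higher-order contributions are dominated'' is not quite enough for the $z_k^q$ term, since $qv(z_k)\geq 1+v(z_k)$ requires $v(z_k)\geq\tfrac{1}{q-1}$; this is harmless because your induction hypothesis gives $v(z_k)\geq m\geq 1$, but you should state that this is what is being used (or include $qv(z_k)$ in the minimum). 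The appeal to Kummer's theorem for $\binom{q-1}{j}$ being $p$-adic units in the last step is unnecessary, as nonnegativity of those valuations already suffices.
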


\begin{proof}
Denote by $z_m\in HW_{(K,\pi)}(\FF_q)$ the element given by the same expression as $y_m$ from \eqref{eq:formula_for_y_n}, i.e.
$$z_m=\sum\limits_{0<k_1<\dots<k_{m-1}}\pi^{\frac{1}{q-1}-q^{-k_1}-\dots-q^{-k_{m-1}}}.$$

By Lemma \ref{lem:LT_for_approximating_can_K}, it suffices to prove that $v(g_m(z_m))>m v(\pi)=1.$ We claim that this formally follows from the above computation in characteristic $p.$ To make this precise, recall that $HW_{(K,\pi)}(\FF_q)=\cO_K((t^{\Q}))/(t-\pi).$ The element $z_m\in HW_{(K,\pi)}(\FF_q)$ is the image of the element $$\tilde{z}_m=\sum\limits_{0<k_1<\dots<k_{n-1}} t^{\frac{1}{q-1}-q^{-k_1}-\dots-q^{-k_{m-1}}}\in\Z((t^{\Q})).$$ Consider the polynomial $\tilde{f}(X)=X^q-tX\in\Z[t][X].$ We define $\tilde{f}^{[m]}(X),\tilde{g}_m(X)\in\Z[t][X]$ similarly. 
The above computation in the equal characteristic case implies that $\tilde{g}_m(\tilde{z}_m)$ is divisible by $p$ in $\Z((t^{\Q})).$ Moreover, since the constant term of the polynomial $\tilde{g}_m(X)$ equals $-t$ and since $\tilde{z}_m$ is a sum of strictly positive powers of $t,$ we conclude that all the exponents in the Hahn series expression of $\tilde{g}_m(\tilde{z}_m)$ are $\geq\veps$ for some rational $\veps>0.$ Applying the natural homomorphisms $\Z[t]\to\cO_K$ and $\Z((t^{\Q}))\to HW_{(K,\pi)}(\FF_q),$ sending $t$ to $\pi,$ we conclude that $g_m(z_m)$ is divisible by $p \pi^{\veps}$ in the valuation ring of $HW_{(K,\pi)}(\FF_q).$ Hence, $$v(g_m(z_m))\geq v(p\pi^{\veps})> 1,$$
as required.
\end{proof}

Next, we slightly improve the bound from Lemma \ref{lem:main_th_for_deeply_ramified}.

\begin{lemma}\label{lem:improved_bound} Suppose that $v(\pi)\leq \frac{p}{(p-1)n}$ for some integer $n>0.$ Then we have $\can_K(\pi)\equiv -\pi\text{ mod }\pi^{n+1}.$ In particular, the statement $S(n,k)$ holds for $n\leq \frac{p^{k+1}}{p-1}.$
\end{lemma}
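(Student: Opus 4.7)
I would pass from $(K,\pi)$ to the totally (and wildly) ramified extension $K'=K(\pi^{\frac1p})$ with uniformizer $\varpi=\pi^{\frac1p}$, invoke the previous lemma upstairs to obtain a very strong congruence for $\can_{K'}(\varpi)$, and then descend along Proposition \ref{prop:can_related_via_Norm}. The factor $\frac{p}{p-1}$ improvement over Lemma \ref{lem:main_th_for_deeply_ramified} should come precisely from the smoothing of principal units under $\Nm_{K'/K}$ quantified in Lemma \ref{lem:norms_for_wildly_ramified}.

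Normalize $v$ with $v(p)=1$. Then $v(\varpi)=v(\pi)/p$ and $m':=e(K'/\Q_p)=p/v(\pi)$. The hypothesis $v(\pi)\leq\frac{p}{(p-1)n}$ gives $v(\varpi)\leq\frac{1}{(p-1)n}$, so Lemma \ref{lem:main_th_for_deeply_ramified} applied to $(K',\varpi)$ yields $\can_{K'}(\varpi)\equiv -\varpi\pmod{\varpi^{m'+1}}$. I write this as $\can_{K'}(\varpi)=-\varpi(1+u)$ with $u\in\varpi^{m'}\cO_{K'}$. By Proposition \ref{prop:can_related_via_Norm},
\[\can_K(\pi)=\Nm_{K'/K}(\can_{K'}(\varpi))=\Nm_{K'/K}(-\varpi)\cdot\Nm_{K'/K}(1+u).\]
Using the minimal polynomial $X^p-\pi$ of $\varpi$, a direct computation gives $\Nm_{K'/K}(\varpi)=(-1)^{p+1}\pi$, hence $\Nm_{K'/K}(-\varpi)=(-1)^{p}\cdot(-1)^{p+1}\pi=-\pi$ for every prime $p$.

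It remains to show $\Nm_{K'/K}(1+u)\in 1+\pi^n\cO_K$. Since $\varpi^{m'}\cO_{K'}=\pi^{m'/p}\cO_{K'}$, I apply Lemma \ref{lem:norms_for_wildly_ramified} with $k=m'$. Using $m'/p=1/v(\pi)$, the two conditions $n\leq m'$ and $(n-m'/p)v(\pi)\leq 1$ reduce to $nv(\pi)\leq p$ and $nv(\pi)\leq 2$ respectively; both follow from $nv(\pi)\leq\frac{p}{p-1}\leq 2$. Therefore $\can_K(\pi)=-\pi(1+r)$ with $r\in\pi^n\cO_K$, which is the desired congruence. The final ``in particular'' clause is immediate: $v(\pi)\leq p^{-k}$ together with $n\leq\frac{p^{k+1}}{p-1}$ forces $v(\pi)\leq\frac{p}{(p-1)n}$.

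The only real obstacle is the bookkeeping of the numerical inequalities needed to invoke Lemma \ref{lem:norms_for_wildly_ramified}; the constant $\frac{p}{p-1}$ in the hypothesis is tuned so that $nv(\pi)\leq 2$ holds with equality in the worst case $p=2$, which is why this particular argument does not improve further without a different input.
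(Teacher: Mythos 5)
Your proof is correct and follows essentially the same route as the paper's: pass to $K' = K(\pi^{1/p})$, apply Lemma \ref{lem:main_th_for_deeply_ramified} upstairs, then descend through Proposition \ref{prop:can_related_via_Norm} and Lemma \ref{lem:norms_for_wildly_ramified}. The only cosmetic difference is that you invoke Lemma \ref{lem:norms_for_wildly_ramified} with $k = m' = e(K'/\Q_p)$, whereas the paper takes $k = n$, for which the required inequality $(n - n/p)v(\pi) \leq v(p)$ matches the stated hypothesis exactly; both choices work.
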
 

\begin{proof} Consider the extension $K'=K(\pi^{\frac1p})$ of $K.$ We have $v(\pi^{\frac1p})\leq \frac{1}{(p-1)n}\leq \frac1n.$ Hence, by Lemma \ref{lem:main_th_for_deeply_ramified} we have $\can_{K'}(\pi^{\frac1p})\equiv -\pi^{\frac1p}\text{ mod }\pi^{\frac{n+1}{p}}.$ 

Now, by Proposition \ref{prop:can_related_via_Norm}, we have $\Nm_{K'/K}(\can_{K'}(\pi^{\frac1p}))=\can_K(\pi).$ Note that $\Nm_{K'/K}(-\pi^{\frac1p})=-\pi.$ Our assumption on $n$ means exactly that $(n-\frac{n}{p})v(\pi)\leq 1=v(p).$ Applying Lemma \ref{lem:norms_for_wildly_ramified}, we obtain
$$\frac{\can_K(\pi)}{-\pi}=\Nm_{K'/K}\left(\frac{\can_{K'}(\pi^{\frac1p})}{-\pi^{\frac{1}{p}}}\right)\in \Nm_{K'/K}(1+\pi^{\frac{n}{p}}\cO_{K'})\subset 1+\pi^n\cO_K,$$
as required.
\end{proof}

Similarly, we obtain the following reduction.

\begin{lemma}\label{lem:reduction_for_S_n_k} Let $n>0$ and $k\geq 0$ be such that $n>\frac{p^{k+1}}{p-1}.$ Then the statement $S(pn-p^{k+1},k+1)$ implies the statement $S(n,k).$\end{lemma}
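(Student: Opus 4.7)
The proof follows almost exactly the strategy used in Lemma \ref{lem:improved_bound}, replacing the input Lemma \ref{lem:main_th_for_deeply_ramified} by the hypothesis $S(pn-p^{k+1},k+1)$ and tracking bounds more carefully. Here is the plan.

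Assume $S(pn-p^{k+1},k+1)$ holds, and let $K/\Q_p$ be a local field with uniformizer $\pi$ satisfying $v(\pi)\leq\frac{1}{p^k}$. Form $K'=K(\pi^{\frac{1}{p}})$; then $v(\pi^{\frac{1}{p}})=\frac{v(\pi)}{p}\leq \frac{1}{p^{k+1}}$, so the hypothesis applies to $(K',\pi^{\frac{1}{p}})$, giving
\[
\can_{K'}(\pi^{\frac{1}{p}})\equiv -\pi^{\frac{1}{p}}\pmod{(\pi^{\frac{1}{p}})^{\,pn-p^{k+1}+1}}.
\]
Equivalently, setting $m:=pn-p^{k+1}$, we have $\frac{\can_{K'}(\pi^{\frac{1}{p}})}{-\pi^{\frac{1}{p}}}\in 1+\pi^{\frac{m}{p}}\cO_{K'}$.

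Next, invoke Proposition \ref{prop:can_related_via_Norm} to obtain $\Nm_{K'/K}(\can_{K'}(\pi^{\frac{1}{p}}))=\can_K(\pi)$. A direct check of conjugates (as in Lemma \ref{lem:improved_bound}) gives $\Nm_{K'/K}(-\pi^{\frac{1}{p}})=-\pi$ for every prime $p$. Therefore
\[
\frac{\can_K(\pi)}{-\pi}=\Nm_{K'/K}\!\left(\frac{\can_{K'}(\pi^{\frac{1}{p}})}{-\pi^{\frac{1}{p}}}\right)\in \Nm_{K'/K}(1+\pi^{\frac{m}{p}}\cO_{K'}).
\]

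Finally, apply Lemma \ref{lem:norms_for_wildly_ramified} with the pair $(n,m)$ in place of $(n,k)$ there. The two hypotheses of that lemma are precisely the two numerical conditions of the present lemma: the inequality $n\leq m=pn-p^{k+1}$ rearranges to $n(p-1)\geq p^{k+1}$, which is guaranteed by the standing assumption $n>\frac{p^{k+1}}{p-1}$; and the inequality $(n-\frac{m}{p})v(\pi)\leq v(p)=1$ becomes $p^k v(\pi)\leq 1$, which is exactly $v(\pi)\leq\frac{1}{p^k}$. Lemma \ref{lem:norms_for_wildly_ramified} then yields $\Nm_{K'/K}(1+\pi^{\frac{m}{p}}\cO_{K'})\subset 1+\pi^n\cO_K$, so $\can_K(\pi)\equiv -\pi\pmod{\pi^{n+1}}$, which is $S(n,k)$.

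There is no real obstacle here: the statement is a transparent rescaling of Lemma \ref{lem:improved_bound}, and the only thing to verify is the bookkeeping that the chosen parameters fit both hypotheses of Lemma \ref{lem:norms_for_wildly_ramified} simultaneously. The mild point worth noting is that the assumption $n>\frac{p^{k+1}}{p-1}$ is used not only to ensure $n\leq m$, but also to guarantee that $S(pn-p^{k+1},k+1)$ is a nontrivial statement (i.e.\ $pn-p^{k+1}>0$); both uses are automatic from this single inequality.
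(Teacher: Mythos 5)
Your proof is correct and follows essentially the same route as the paper: apply $S(pn-p^{k+1},k+1)$ to $K'=K(\pi^{\frac1p})$, then descend via Proposition \ref{prop:can_related_via_Norm} and Lemma \ref{lem:norms_for_wildly_ramified} with exactly the same parameter bookkeeping (your $m=pn-p^{k+1}$, with $\pi^{m/p}=\pi^{n-p^k}$). The numerical verifications you record, including $\Nm_{K'/K}(-\pi^{\frac1p})=-\pi$ for all $p$, match the paper's argument.
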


\begin{proof}The argument is basically the same as for Lemma \ref{lem:improved_bound}. Suppose that $v(\pi)\leq \frac{1}{p^k}$ and suppose that the statement $S(pn-p^{k+1},k+1)$ holds. Consider the extension $K'=K(\pi^{\frac1p})$ of $K.$ Then $v(\pi^{\frac1p})\leq \frac{1}{p^{k+1}},$ and applying $S(pn-p^{k+1},k+1)$ we obtain
$$\can_{K'}(\pi^{\frac1p})\equiv -\pi^{\frac1p}\text{ mod }\pi^{\frac1p+n-p^k}.$$ Our assumptions imply that $p(n-p^k)\geq n$ and $$(n-(n-p^k))v(\pi)=p^k v(\pi)\leq 1=v(p).$$
Applying Proposition \ref{prop:can_related_via_Norm} and Lemma \ref{lem:norms_for_wildly_ramified}, we obtain
$$\frac{\can_K(\pi)}{-\pi}=\Nm_{K'/K}\left(\frac{\can_{K'}(\pi^{\frac1p})}{-\pi^{\frac{1}{p}}}\right)\in \Nm_{K'/K}(1+\pi^{n-p^k}\cO_{K'})\subset 1+\pi^n\cO_K,$$
as required.\end{proof}

Now we prove all the statements $S(n,k)$ by induction on the positive integer parameter $r=\lceil \frac{n}{p^k}\rceil.$ The base of induction $r=1$ follows from Lemma \ref{lem:main_th_for_deeply_ramified}.

Let $r>1$ and suppose that the statements $S(n,k)$ hold for $\lceil\frac{n}{p^k}\rceil\leq r-1.$ Take some $n,k$ such that $\lceil\frac{n}{p^k}\rceil=r.$ If $n\leq \frac{p^{k+1}}{p-1}$ (which is possible only for $r=2$), then $S(n,k)$ holds by Lemma \ref{lem:improved_bound}. Assuming $n>\frac{p^{k+1}}{p-1},$ it follows from the induction hypothesis that the statement $S(pn-p^{k+1},k+1)$ holds: we have
$$\left\lceil \frac{pn-p^{k+1}}{p^{k+1}} \right\rceil = \left\lceil \frac{n}{p^k}-1 \right\rceil = r-1.$$
Applying Lemma \ref{lem:reduction_for_S_n_k}, we conclude that $S(n,k)$ holds, as required.

So we proved the statements $S(n,k)$ for all $n>0,$ $k\geq 0.$ Therefore, without any assumption on the ramification index $e(K/\Q_p),$ we have $\can_K(\pi)\equiv -\pi\text{ mod }\pi^{n+1}$ for all $n>0,$ i.e. $\can_K(\pi)=-\pi.$ This proves Theorem \ref{th:main} in the mixed characteristic case.


\end{document}